\newtheorem{thm}{Theorem}[section]
\newtheorem{prob}{Problem}
\newtheorem{lem}[thm]{Lemma}
\newtheorem{pro}{Proposition}[section]
\newtheorem{conj}{Conjecture}
\theoremstyle{definition}
\newcommand{\red}[1]{\textcolor{red}{#1}}
\begin{document}
\title{Spectral radius and edge-disjoint spanning trees}
\author{Dandan Fan$^{a,b}$, Xiaofeng Gu$^c$, Huiqiu Lin$^{a,}$\thanks{Corresponding author. {\it E-mail}: {\tt ddfan0526@163.com} (D. Fan), {\tt xgu@westga.edu} (X. Gu), {\tt huiqiulin@126.com} (H. Lin)}\\[2mm]
\small\it $^a$ School of Mathematics, East China University of Science and Technology, \\
\small\it   Shanghai 200237, China\\[1mm]
\small\it $^b$ College of Mathematics and Physics, Xinjiang Agricultural University,\\
\small\it Urumqi, Xinjiang 830052, China\\[1mm]
\small\it $^c$ Department of Computing and Mathematics, University of West Georgia, \\
\small\it Carrollton, GA 30118, USA
}

\date{}
\maketitle
\begin{abstract}
The spanning tree packing number of a graph $G$, denoted by $\tau(G)$, is the maximum number of edge-disjoint spanning trees contained in $G$. The study of $\tau(G)$ is one of the classic problems in graph theory. Cioab\u{a} and Wong initiated to investigate $\tau(G)$ from spectral perspectives in 2012 and since then, $\tau(G)$ has been well studied using the second largest eigenvalue of the adjacency matrix in the past decade. In this paper, we further extend the results in terms of the number of edges and the spectral radius, respectively; and prove tight sufficient conditions to guarantee $\tau(G)\geq k$ with extremal graphs characterized. Moreover, we confirm a conjecture of Ning, Lu and Wang on characterizing graphs with the maximum spectral radius among all graphs with a given order as well as fixed minimum degree and fixed edge connectivity. Our results have important applications in rigidity and nowhere-zero flows. We conclude with some open problems in the end.
\end{abstract}

{\small \noindent \textbf{Keywords:} Spanning tree packing; edge connectivity; spectral radius; eigenvalue}

{\small \noindent \textbf{MSC 2020:} 05C50, 05C70, 05C40}

\section{Introduction}
In this paper, we only consider simply graphs unless otherwise stated and $k$ always denotes a positive integer. The study of edge-disjoint spanning trees has been shown to be very important to graphs and has many applications in fault-tolerance networks as well as network reliability~\cite{Cunningham,Hobbs91}. Thus it is quite interesting to explore how many edge-disjoint spanning trees in a given graph. The \textit{spanning tree packing number} (or simply \textit{STP number}) of a graph $G$, denoted by $\tau(G)$, is the maximum number of edge-disjoint spanning trees contained in $G$. Nash-williams~\cite{Nash-Williams}  and Tutte~\cite{Tutte} independently discovered a fundamental theorem that characterizes graphs $G$ with $\tau(G)\ge k$ (See Theorem~\ref{lem::2.9} in the next section). The \textit{edge connectivity} of $G$, denote by $\kappa'(G)$, is the minimum cardinality of an edge cut of $G$. It is known that $\kappa'(G)$ and $\tau(G)$ are closely related. In fact, the fundamental theorem of Nash-williams~\cite{Nash-Williams} and Tutte~\cite{Tutte} implies that if $\kappa'(G)\ge 2k$, then $\tau(G)\ge k$. For more about the spanning tree packing number, we refer readers to the survey~\cite{Palmer01} by Palmer.


\medskip
The number of spanning trees has also been well studied from spectral perspectives. For a graph $G$, let $A(G)$ denote  the adjacency matrix of $G$ and let $\lambda_i(G)$ denote the $i$th largest eigenvalue of $A(G)$. In particular, the largest eigenvalue of $A(G)$ is called the \textit{spectral radius} of $G$ and is denoted by $\rho(G)$. Denote by $D(G)$ the diagonal matrix of vertex degrees of $G$. The \textit{Laplacian matrix} of $G$ is defined as $L(G)=D(G)-A(G)$. The Laplacian matrix is positive semidefinite and we order its eigenvalues as $\mu_{1}\geq\mu_{2}\geq\ldots\geq\mu_{n-1}\geq\mu_{n}=0$. The well-known Matrix-Tree Theorem of Kirchhoff \cite{Kirchhoff} indicates that the number of spanning trees (not necessarily edge-disjoint) of a graph $G$ with $n$ labelled vertices is $\frac{\prod_{i=1}^{n-1}\mu_{i}}{n}$. 
For edge-disjoint spanning trees, Seymour proposed the following problem in private communication to Cioab\u{a} as mentioned in~\cite{Wong}. 
\begin{prob}\label{prob1}
Let $G$ be a connected graph. Determine the relationship between $\tau(G)$ and the eigenvalues of $G$.
\end{prob}

Inspired by the Kirchhoff's Matrix-Tree Theorem and Problem~\ref{prob1}, Cioab\u{a} and Wong \cite{Wong} started to study the spanning tree packing number via the second largest eigenvalue of the adjacency matrix. They proved that for a $d$-regular connected graph $G$, $\tau(G)\geq k$ if $\lambda_{2}(G)<d-\frac{2(2k-1)}{d+1}$ for $d\geq 2k\geq 4$, and further conjectured that the sufficient condition can be improved to $\lambda_{2}(G)<d-\frac{2k-1}{d+1}$. In the same paper, they did the preliminary work of this conjecture for $k=2,3$ and gave examples to show the bound is best possible. Later, Gu et al.~\cite{Gu-Lai} extended the conjecture to graphs that are not necessarily regular, that is, $\tau(G)\geq k$ if $\lambda_2(G)< \delta-\frac{2k-1}{\delta+1}$ for $\delta\geq 2k\geq 4$. They confirmed the conjecture for $k=2,3$ and obtained a partial result that $\lambda_2(G)< \delta-\frac{3k-1}{\delta+1}$ suffices. This conjecture was completely settled in 2014 by Liu et al.~\cite{Liu-Hong} who proved a stronger result, which also implied the truth of the conjecture of Cioab\u{a} and Wong \cite{Wong}. Most recently, the result in~\cite{Liu-Hong} has been shown to be essentially best possible in \cite{coppww22} by constructing extremal graphs.
On the other hand, the result was extended to a fractional version by Hong et al.~\cite{HGLL16}, and improved by Liu, Lai and Tian \cite{Liu-Lai} with a Moore function. 

Motivated by Problem~\ref{prob1} and the above results, we study the spanning tree packing number by means of the spectral radius of graphs. We first investigate an extremal result for $\tau(G)\ge k$. Let $e(G)$ denote the number of edges in $G$.

\begin{thm}\label{thm::edgenumber}
Let $G$ be a connected graph with minimum degree $\delta\geq 2k$ and order $n\geq 2\delta+2$. If $e(G)\geq {\delta+1\choose2}+{n-\delta-1\choose 2} +k$,
then $\tau(G)\geq k$.
\end{thm}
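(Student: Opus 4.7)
My plan is to argue by contradiction via the Nash-Williams--Tutte characterization (Theorem~\ref{lem::2.9}). Suppose $\tau(G) \leq k-1$; then there must exist a partition $\mathcal{P} = \{V_1, \ldots, V_t\}$ of $V(G)$ with $t \geq 2$ whose crossing edges satisfy $e_G(\mathcal{P}) \leq k(t-1)-1$. Writing $n_i = |V_i|$, one immediately gets
\begin{equation*}
e(G) \;\leq\; \sum_{i=1}^{t} \binom{n_i}{2} + k(t-1) - 1,
\end{equation*}
so the strategy is to upper bound $\sum_i \binom{n_i}{2}$ sharply enough that the right side falls strictly below $\binom{\delta+1}{2}+\binom{n-\delta-1}{2}+k$, contradicting the edge hypothesis.

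The first key step is a structural claim: at least two parts of $\mathcal{P}$ have size $\geq \delta+1$. Indeed, if $n_i \leq \delta$, then each vertex of $V_i$ has at most $n_i-1$ neighbors inside $V_i$ and therefore at least $\delta-n_i+1$ neighbors outside, so $e_G(V_i, V\setminus V_i) \geq n_i(\delta+1-n_i) \geq \delta$ because the quadratic $x(\delta+1-x)$ is minimized on $[1,\delta]$ at the endpoints. Letting $s$ count the parts with $n_i \leq \delta$ and using $\delta \geq 2k$, I obtain $e_G(\mathcal{P}) \geq s\delta/2 \geq sk$, which together with $e_G(\mathcal{P}) \leq k(t-1)-1$ forces $s \leq t-2$. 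After relabeling, I may assume $n_1, n_2 \geq \delta+1$.

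The second step bounds $\sum_i \binom{n_i}{2}$ via convexity of $\binom{x}{2}$: subject to $\sum_i n_i = n$, $n_1,n_2 \geq \delta+1$, and $n_i \geq 1$, the maximum is attained at the extreme configuration $n_1 = \delta+1$, $n_2 = n-\delta-t+1$, $n_3 = \cdots = n_t = 1$, yielding $\sum_i \binom{n_i}{2} \leq \binom{\delta+1}{2} + \binom{n-\delta-t+1}{2}$. Substituting, the required contradiction reduces to verifying $\binom{n-\delta-1}{2} - \binom{n-\delta-t+1}{2} \geq k(t-2)$, and a short computation rewrites the left side as $(t-2)(2n-2\delta-t-1)/2$. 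For $t=2$ both sides vanish, but the $-1$ slack in the Nash-Williams--Tutte bound already produces a strict contradiction. For $t \geq 3$, the inequality reduces to $t \leq 2n-2\delta-2k-1$; since feasibility of $\mathcal{P}$ with two large parts forces $t \leq n-2\delta$, and since $n \geq 2\delta+2 \geq 4k+2$ gives $n-2\delta \leq 2n-2\delta-2k-1$, the contradiction follows. I expect the delicate point to be precisely this final arithmetic in the $t \geq 3$ regime, where the minimum-degree lemma is what prevents small parts from inflating the convexity bound.
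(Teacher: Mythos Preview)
Your proof is correct and follows essentially the same route as the paper: contradiction via the Nash-Williams--Tutte theorem, the observation that at least two parts must have size $\geq \delta+1$, a convexity bound on $\sum_i\binom{n_i}{2}$, and a final arithmetic comparison. Your execution is slightly more streamlined—the paper separately handles trivial versus nontrivial parts and invokes Lemma~\ref{lem::2.8} to find the two large parts, then splits into cases according to which part has maximal size before arriving at the same upper bound $\binom{\delta+1}{2}+\binom{n-\delta-t+1}{2}$, and finishes by showing the resulting quadratic $g(t)$ is decreasing on $[2,n-2\delta]$—whereas you get the two large parts directly from the boundary estimate $e_G(V_i,V\setminus V_i)\geq n_i(\delta+1-n_i)\geq\delta$ and handle the convexity in one pass by pushing mass into the largest part. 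One small point worth making explicit in your write-up: the extreme configuration you name is indeed the maximizer among all vertices of the constraint polytope (including the alternative $n_1=n_2=\delta+1$ with one of $n_3,\ldots,n_t$ large), which follows since $t\leq n-2\delta$; the paper checks this comparison explicitly.
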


The condition in Theorem~\ref{thm::edgenumber} is tight. Denote by $K_n$ the complete graph on $n$ vertices, and $\mathcal{G}_{n,n_1}^{i}$ the set of graphs obtained from $K_{n_1}\cup K_{n-n_{1}}$ by adding $i$ edges between $K_{n_1}$ and $K_{n-n_{1}}$. Notice that any graph $G$ in $\mathcal{G}_{n,\delta+1}^{k-1}$ has exactly ${\delta+1\choose2} + {n-\delta-1\choose 2} +k-1$ edges but $\tau(G)<k$.

\medskip
We then focus on a spectral analogue. The corresponding spectral problem is much harder. Let $B_{n,\delta+1}^{i}$ be a graph obtained from $K_{\delta+1}\cup K_{n-\delta-1}$ by adding $i$ edges joining a vertex in $K_{\delta+1}$ and $i$ vertices in $K_{n-\delta-1}$. We succeed in discovering a sufficient condition for $\tau(G)\geq k$ via the spectral radius, and characterize the unique spectral extremal graph $B_{n,\delta+1}^{k-1}$ among the structural extremal graph family $\mathcal{G}_{n,\delta+1}^{k-1}$.
\begin{thm}\label{thm::1.2}
Let $k\geq 2$, and let $G$ be a connected graph with minimum degree $\delta\geq 2k$ and order $n\geq 2\delta+3$. If $\rho(G)\geq \rho(B_{n,\delta+1}^{k-1})$, then $\tau(G)\geq k$ unless $G\cong B_{n,\delta+1}^{k-1}$.
\end{thm}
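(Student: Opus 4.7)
I argue by contradiction. Suppose $G$ satisfies the hypotheses of the theorem, $\rho(G)\ge\rho(B_{n,\delta+1}^{k-1})$, yet $\tau(G)\le k-1$; the goal is to deduce $G\cong B_{n,\delta+1}^{k-1}$. The plan is to reduce to the extremal edge count coming from Theorem~\ref{thm::edgenumber} and then identify the unique spectral maximiser inside the associated structural family $\mathcal{G}^{k-1}_{n,\delta+1}$. Since $\delta\ge 2k$, $n\ge 2\delta+3$ and $\tau(G)<k$, Theorem~\ref{thm::edgenumber} immediately forces $e(G)\le m_0:=\binom{\delta+1}{2}+\binom{n-\delta-1}{2}+k-1$, and I split the analysis into the slack case $e(G)\le m_0-1$ and the tight case $e(G)=m_0$.

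For the slack case, I plan to bound $\rho(G)$ from above by a Hong-type inequality of the form $\rho(G)\le\tfrac{\delta-1}{2}+\sqrt{2e(G)-n\delta+\tfrac{1}{4}(\delta+1)^{2}}$, valid for a connected graph with minimum degree $\delta$, and compare it with a lower bound on $\rho(B_{n,\delta+1}^{k-1})$ obtained from an equitable four-cell partition of $B_{n,\delta+1}^{k-1}$ (the distinguished vertex of $K_{\delta+1}$, the rest of $K_{\delta+1}$, its $k-1$ neighbours in $K_{n-\delta-1}$, and the remainder of $K_{n-\delta-1}$). The aim is to show $\rho(G)<\rho(B_{n,\delta+1}^{k-1})$ whenever $e(G)\le m_0-1$, contradicting our assumption.

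For the tight case, applying Nash-Williams--Tutte (Theorem~\ref{lem::2.9}) gives a partition $\mathcal{P}=\{V_{1},\ldots,V_{r}\}$ with $e_{G}(\mathcal{P})\le k(r-1)-1$. Using the minimum-degree inequality $\sum_{i:\,|V_{i}|\le\delta}|V_{i}|(\delta-|V_{i}|+1)\le 2e_{G}(\mathcal{P})$ together with $\delta\ge 2k$, I expect to force $|V_{i}|\ge\delta+1$ for every $i$; then Schur-convexity of $x\mapsto\binom{x}{2}$, combined with the edge-count equality $e(G)=m_{0}$, pins down $r=2$, $\{|V_{1}|,|V_{2}|\}=\{\delta+1,n-\delta-1\}$, both $G[V_{i}]$ complete, and $e_{G}(\mathcal{P})=k-1$; that is, $G\in\mathcal{G}^{k-1}_{n,\delta+1}$. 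Within this family I finish with a Kelmans-style edge shift on the $k-1$ cross edges, using the Perron eigenvector of $G$: relocating a cross edge from a lower-weight endpoint to a higher-weight one strictly increases $\rho$, and iterating forces all cross edges to share a common endpoint in $K_{\delta+1}$ and to use $k-1$ distinct vertices of $K_{n-\delta-1}$, which is exactly $B_{n,\delta+1}^{k-1}$.

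The main obstacle is the slack case: the Hong-type bound is not tight, and a direct numerical comparison with $\rho(B_{n,\delta+1}^{k-1})$ may fail when $e(G)$ is only one or two edges below $m_{0}$. A backup plan is to add missing edges to $G$ one at a time -- each addition can only increase $\rho$ -- until either the resulting graph lies in some member of $\mathcal{G}^{k-1}_{n,\delta+1}$, at which point the tight-case analysis finishes the argument, or the edge count exceeds $m_{0}$, contradicting the bound just established and again yielding $\rho(G)<\rho(B_{n,\delta+1}^{k-1})$.
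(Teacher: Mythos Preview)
Your split into ``slack'' ($e(G)\le m_0-1$) and ``tight'' ($e(G)=m_0$) cases does not close. In the slack case, the Hong--Shu--Fang/Nikiforov bound with $e(G)\le m_0-1$ yields only $\rho(G)<n-\delta-1$ (essentially the same computation as in Lemma~\ref{lem::2.3}), whereas the available lower bound is $\rho(B_{n,\delta+1}^{k-1})>n-\delta-2$. These estimates differ by nearly $1$ in the wrong direction, and for large $n$ both are asymptotically sharp, so no refinement of the equitable-partition lower bound will save the comparison. Your backup plan is also broken: once you add edges to $G$, the new graph $G'$ need not satisfy $\tau(G')\le k-1$, so Theorem~\ref{thm::edgenumber} no longer constrains its edge count; and even if $e(G')>m_0$, that tells you nothing about $\rho(G)$ versus $\rho(B_{n,\delta+1}^{k-1})$, since adding edges only pushes $\rho$ up.

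The paper avoids this by splitting on the number $t$ of parts in the Nash-Williams--Tutte partition rather than on $e(G)$. When $t=2$, the two parts have sizes $a$ and $n-a$ with $a\ge\delta+1$, and $G$ is a spanning subgraph of some $H\in\mathcal{G}_{n,a}^{k-1}$; crucially, $a$ may exceed $\delta+1$, so one needs both Lemma~\ref{lem::2.5} (the extremal graph inside $\mathcal{G}_{n,\delta+1}^{k-1}$ is $B_{n,\delta+1}^{k-1}$) and Lemma~\ref{lem::2.6} (every graph in $\mathcal{G}_{n,a}^{k-1}$ with $a\ge\delta+2$ has strictly smaller spectral radius than $B_{n,\delta+1}^{k-1}$). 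When $t\ge3$, one combines the spectral hypothesis $\rho(G)>n-\delta-2$ with the Hong bound to force a \emph{lower} bound on $e(G)$, and plays this against the \emph{upper} bound coming from the partition counting in the proof of Theorem~\ref{thm::edgenumber}, obtaining a contradiction. Your tight case recovers part of the $t=2$ analysis (the subcase $a=\delta+1$ with equality in all edge estimates), but the remaining $t=2$ configurations, where $G$ is a proper subgraph of $H$ or $a>\delta+1$, fall into your slack case and are exactly what the Hong bound cannot reach.
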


Our proofs are novel and as an application, we will use similar proof techniques to settle a conjecture of Ning, Lu and Wang \cite{Ning}. Let $\mathcal{A}_{n}^{\kappa',\delta}$ be a set of graphs of order $n$ with minimum degree $\delta$ and edge connectivity $\kappa'$.  For $0\leq \kappa'\leq 3$, Ning, Lu and Wang \cite{Ning} determined that the unique extremal graph with the maximum spectral radius is $B_{n,\delta+1}^{\kappa'}$, and they proposed the following conjecture.

\begin{conj}[Ning, Lu and Wang~\cite{Ning}]\label{conj1}
For $4\leq \kappa'<\delta$, $B_{n,\delta+1}^{\kappa'}$ is the graph with the maximum spectral radius in $\mathcal{A}_{n}^{\kappa',\delta}$.
\end{conj}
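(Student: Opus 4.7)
The plan is to assume $G^* \in \mathcal{A}_n^{\kappa',\delta}$ is a spectral-radius maximizer and show $G^* \cong B_{n,\delta+1}^{\kappa'}$. Let $[S,\bar{S}]$ be a minimum edge cut of $G^*$, chosen so that $s := |S| \leq n-s$. My first task is to pin down a lower bound on $s$. Since every $v \in S$ satisfies $d_{G^*}(v) \geq \delta$,
$$s\delta \;\leq\; \sum_{v \in S} d_{G^*}(v) \;=\; 2 e(G^*[S]) + \kappa' \;\leq\; s(s-1) + \kappa',$$
so $s^2 - (\delta+1)s + \kappa' \geq 0$. With $\kappa' < \delta$, this convex quadratic is strictly negative at both $s=1$ and $s=\delta$, hence negative on the entire interval $[1,\delta]$, forcing $s \geq \delta+1$.

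The next step is a global edge count. Since $\binom{s}{2} + \binom{n-s}{2}$ is strictly decreasing in $s$ on $[\delta+1,\,n/2]$ (when $n$ is large enough for this range to be nondegenerate),
$$e(G^*) \;\leq\; \binom{s}{2} + \binom{n-s}{2} + \kappa' \;\leq\; \binom{\delta+1}{2} + \binom{n-\delta-1}{2} + \kappa' \;=\; e(B_{n,\delta+1}^{\kappa'}),$$
with equality throughout iff $s = \delta+1$ and both $G^*[S]$ and $G^*[\bar{S}]$ are complete. When this is strict, I would apply a Hong-type spectral upper bound $\rho(G) \leq \phi(n, e(G), \delta)$ valid for graphs of minimum degree $\delta$, and compare it against a lower bound on $\rho(B_{n,\delta+1}^{\kappa'})$ obtained from the equitable partition of $B_{n,\delta+1}^{\kappa'}$ into four classes: the single high-degree vertex of $S$, the other $\delta$ vertices of $S$, the $\kappa'$ cross-incident vertices of $\bar{S}$, and the remaining $n-\delta-1-\kappa'$ vertices of $\bar{S}$. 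The associated $4\times 4$ quotient matrix has $\rho(B_{n,\delta+1}^{\kappa'})$ as its largest eigenvalue, and the aim is to show $\phi(n,m,\delta) < \rho(B_{n,\delta+1}^{\kappa'})$ whenever $m < e(B_{n,\delta+1}^{\kappa'})$, which contradicts the extremality of $G^*$.

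What remains is the case $s = \delta+1$, $G^*[S] = K_{\delta+1}$, $G^*[\bar{S}] = K_{n-\delta-1}$, with the $\kappa'$ cross edges forming some bipartite graph $H$ between $S$ and $\bar{S}$. To pin down $H$, I would use a Perron-eigenvector swap. Let $x$ be the Perron vector of $G^*$, and suppose two distinct vertices $u_1,u_2 \in S$ each carry at least one cross edge with $x_{u_1} \geq x_{u_2}$. If some $w \in \bar{S}$ is a cross-neighbor of $u_2$ but not of $u_1$, then replacing the edge $u_2w$ by $u_1w$ changes $\rho$ by $(x_{u_1}-x_{u_2})x_w \geq 0$; in the boundary case $N_{\bar{S}}(u_1) \supseteq N_{\bar{S}}(u_2)$, I would instead relocate some edge $u_2w$ to a pair $u_1w'$ with $w' \notin N(u_1)$ and $x_{w'}$ as large as possible, ensuring $x_{u_1}x_{w'} > x_{u_2}x_w$. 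Either swap preserves the cut size $\kappa'$, and preserves the minimum degree $\delta$ because $u_2$ had degree at least $\delta+1$ in the complete $K_{\delta+1}$ once it carried a cross edge, while the cross-edge-free vertices of $S$ (of which there are at least $\delta+1-\kappa' \geq 2$) remain minimum-degree vertices. Iterating concentrates all $\kappa'$ cross edges at a single vertex $u^* \in S$, giving $G^* \cong B_{n,\delta+1}^{\kappa'}$.

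The main obstacle I anticipate is the spectral comparison in the middle step: both $\rho(G^*)$ and $\rho(B_{n,\delta+1}^{\kappa'})$ are $n-\delta-2+O(1)$, so a naive bound such as $\sqrt{2m-n+1}$ is too loose, and one needs a $\delta$-sensitive estimate sharp to leading constants together with a clean quotient-matrix lower bound for $\rho(B_{n,\delta+1}^{\kappa'})$. A secondary subtlety is the boundary case of the Perron swap in the final step, where a straightforward endpoint swap fails and the cross-endpoint relocation must be signed positively; this is handled by ordering the vertices of $\bar{S}$ by their $x$-values, which, by Perron-Frobenius, track both cross-degree and the cross-degrees of their neighbors.
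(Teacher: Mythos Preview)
Your three-step outline is exactly the paper's architecture: reduce to the class $\mathcal{G}_{n,\delta+1}^{\kappa'}$ and then single out $B_{n,\delta+1}^{\kappa'}$ inside that class. The paper shortcuts your Step~2 by quoting Ning--Lu--Wang's own Theorem~3.1 (their Lemma~\ref{lem::3.1}), which already places the maximizer in $\mathcal{G}_{n,\delta+1}^{\kappa'}$; so the spectral comparison you flag as the ``main obstacle'' does not need to be redone from scratch.

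The genuine gap is in your Step~3 boundary case. When $N_{\bar S}(u_2)\subseteq N_{\bar S}(u_1)$ you propose to delete $u_2w$ and add $u_1w'$ with $w'\notin N_{\bar S}(u_1)$, claiming $x_{u_1}x_{w'}>x_{u_2}x_{w}$. This can fail. Take $\kappa'=4$ with $N_{\bar S}(u_1)=N_{\bar S}(u_2)=\{v_1,v_2\}$. By symmetry $x_{u_1}=x_{u_2}$, while any $w'\in\bar S\setminus\{v_1,v_2\}$ has strictly fewer cross-neighbours than $v_1$, hence $x_{w'}<x_{v_1}$ by the eigenvector equations; your Rayleigh increment is then \emph{negative}. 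More generally, from $(\rho+1)(x_{v_1}-x_{w'})=\sum_{u\in N_S(v_1)}x_u>0$ one cannot control the sign of $x_{u_1}x_{w'}-x_{u_2}x_{w}$ using only the Perron vector of $G^*$.

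The paper handles this (Lemma~\ref{lem::2.5}, extended to Lemma~\ref{lem::3.3}) by first performing nesting swaps on \emph{both} sides so that $N_{V_2}(u_{i+1})\subseteq N_{V_2}(u_i)$ and $N_{V_1}(v_{j+1})\subseteq N_{V_1}(v_j)$, and then, in the remaining case, abandoning the one-sided Rayleigh argument altogether: it compares $G'$ directly to $G''\cong B_{n,\delta+1}^{\kappa'}$ via the two-eigenvector identity
\[
y^{T}(\rho''-\rho')x \;=\; y^{T}\bigl(A(G'')-A(G')\bigr)x,
\]
where $y$ is the Perron vector of $G''$. The key quantitative input is the sandwich $n-\delta-2<\rho',\rho''<n-\delta$ (Lemma~\ref{lem::3.2}), which lets one bound $x_{v_{r+1}}$ from below and $y_{u_2}$ from above and force the right-hand side to be strictly positive for $n\ge 2\delta+4$. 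Your proposal is missing this two-eigenvector step and the accompanying eigenvalue sandwich; without them the endpoint relocation in the boundary case does not go through.
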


We confirm this conjecture for $n\ge 2\delta +4$ as described below.

\begin{thm}\label{thm::1.3}
Let $G\in \mathcal{A}_{n}^{\kappa',\delta}$ where $4\leq\kappa'<\delta$ and $n\geq 2\delta+4$. Then $\rho(G)\leq \rho(B_{n,\delta+1}^{\kappa'})$, with equality if and only if $G\cong B_{n,\delta+1}^{\kappa'}$.
\end{thm}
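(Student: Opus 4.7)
The plan is to argue via a minimum edge cut, in analogy with the approach used for Theorem~\ref{thm::1.2}. Let $G \in \mathcal{A}_n^{\kappa',\delta}$ attain the maximum spectral radius and fix a minimum edge cut $[S, \bar S]$ with $s := |S| \leq n - s$. Summing degrees on $S$,
\[
s\delta \;\leq\; \sum_{v \in S} d_G(v) \;=\; 2 e(G[S]) + \kappa' \;\leq\; s(s-1) + \kappa',
\]
so $s^2 - (\delta+1)s + \kappa' \geq 0$; since $\kappa' < \delta$ the smaller root lies below $1$, forcing $s \geq \delta + 1$, and by symmetry $s \leq n - \delta - 1$.

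The core of the proof, which I expect to be the main obstacle, is reducing to $s = \delta + 1$. For any $H \in \mathcal{A}_n^{\kappa',\delta}$ with the same $s$, let $\overline H$ arise from $H$ by completing $H[S]$ and $H[\bar S]$ to cliques while keeping the cross-edges, and let $F_s$ denote $K_s \cup K_{n-s}$ with the $\kappa'$ cross-edges all incident to a single vertex of $K_s$. Edge-addition monotonicity together with a Perron-vector shift applied to the bipartite cross-edge structure yields
\[
\rho(H) \;\leq\; \rho(\overline H) \;\leq\; \rho(F_s).
\]
If $s \geq \delta + 2$ then also $n - s \geq \delta + 2$ (from $n \geq 2\delta + 4$), so the internal degree within any completed part strictly exceeds $\delta$; hence the minimum-degree vertex of $G$ cannot sit in a fully cliqued part, forcing $G \subsetneq \overline G$ and $\rho(G) < \rho(F_s)$. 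The remaining point is the monotonicity $\rho(F_s) \leq \rho(F_{\delta+1})$ for $s \in [\delta+2, \lfloor n/2 \rfloor]$, which I plan to obtain by writing the $4 \times 4$ equitable-partition quotient matrix of $F_s$ explicitly and showing via a characteristic-polynomial analysis that its largest eigenvalue is non-increasing in $s$ throughout this range. Combining these estimates gives $\rho(G) < \rho(B_{n,\delta+1}^{\kappa'})$, contradicting maximality.

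Once $s = \delta + 1$ is established, the argument simplifies. Since $|\bar S| = n - \delta - 1 \geq \delta + 3$, any vertex of $\bar S$ would have internal degree at least $n - \delta - 2 > \delta$ if $G[\bar S]$ were completed, so the minimum-degree vertex of $G$ must lie in $S$. Adding any missing edge inside $\bar S$ therefore preserves the cut $[S, \bar S]$ of size $\kappa'$ (hence $\kappa'(G)$) and leaves the min-degree vertex untouched while strictly increasing $\rho$; by maximality, $G[\bar S] \cong K_{n-\delta-1}$. Because $\kappa' < \delta + 1 = |S|$, at least one vertex of $S$ is incident to no cross-edge, so filling $G[S]$ to $K_{\delta+1}$ keeps that vertex at degree exactly $\delta$ and keeps $G$ in the class; maximality then also forces $G[S] \cong K_{\delta+1}$.

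Finally, to show the $\kappa'$ cross-edges share a common endpoint in $S$, let $x$ be the Perron eigenvector of $G$ and let $u^* \in S$ maximize $x_{u^*}$ over vertices of $S$ incident to a cross-edge. If another cross-incident vertex $u_j \neq u^*$ exists, a Kelmans-type transform on the pair $(u_j, u^*)$ --- redirecting each cross-edge $u_j w$ to $u^* w$ when $u^* w \notin E(G)$, and otherwise using a double-swap through a fresh target $w' \in \bar S$ not yet adjacent to $u^*$ --- produces $G' \in \mathcal{A}_n^{\kappa',\delta}$ satisfying
\[
x^\top A(G') x - x^\top A(G) x \;=\; 2(x_{u^*} - x_{u_j})\, x_w \;\geq\; 0.
\]
When the inequality is strict this already gives $\rho(G') > \rho(G)$; when $x_{u^*} = x_{u_j}$, the vector $(A(G') - A(G))x$ is non-trivial, so $x$ fails to be an eigenvector of $A(G')$ and the Rayleigh quotient $x^\top A(G') x / \|x\|^2 = \rho(G)$ is strictly less than $\rho(G')$. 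Either way maximality is contradicted, so all $\kappa'$ cross-edges emanate from $u^*$ and $G \cong B_{n,\delta+1}^{\kappa'}$.
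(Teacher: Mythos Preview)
Your overall strategy---reduce via a minimum edge cut, force $s=\delta+1$, complete both sides, then pin down the cross-edge pattern---is sound in spirit, but the paper takes a much shorter route: it invokes Theorem~3.1 of Ning, Lu and Wang \cite{Ning} (stated here as Lemma~\ref{lem::3.1}) as a black box to conclude immediately that the extremal graph lies in $\mathcal{G}_{n,\delta+1}^{\kappa'}$, and then applies Lemma~\ref{lem::3.3} (proved exactly as Lemma~\ref{lem::2.5}) to single out $B_{n,\delta+1}^{\kappa'}$. So your first four stages amount to a reproof of Lemma~\ref{lem::3.1}, while your final stage replaces the two-eigenvector comparison of Lemma~\ref{lem::2.5}/\ref{lem::3.3} by a one-sided Kelmans move.

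There are two genuine gaps. First, in the $s=\delta+1$ phase your claim that ``the minimum-degree vertex of $G$ must lie in $S$'' does not follow from the sentence preceding it: knowing that a \emph{completed} $G[\bar S]$ would have internal degrees exceeding $\delta$ says nothing about degrees in $G$ before completion, and a vertex of degree $\delta$ may well sit in $\bar S$. The easy fix is to swap the order: complete $G[S]$ first (your argument for that step is valid, since some vertex of $S$ with no cross-edge retains degree exactly $\delta$ afterward), and then that same vertex witnesses $\delta(G')=\delta$ when you subsequently complete $G[\bar S]$.

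Second, and more seriously, the displayed identity $x^\top A(G')x - x^\top A(G)x = 2(x_{u^*}-x_{u_j})x_w$ is correct only in the non-swap case. In the double-swap case you remove $u_jw$ and add $u^*w'$, giving a change of $2(x_{u^*}x_{w'}-x_{u_j}x_w)$; since $w$ is adjacent to $u^*$ while $w'$ is not, the eigenvector equations on $\bar S$ typically force $x_w>x_{w'}$, and then $x_{u^*}\ge x_{u_j}$ alone does not control the sign. Your follow-up ``$(A(G')-A(G))x$ is non-trivial'' argument for the equality case does not rescue this, because the Rayleigh quotient may now be strictly \emph{below} $\rho(G)$. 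This is precisely why the paper's Lemma~\ref{lem::2.5} (and hence Lemma~\ref{lem::3.3}) does \emph{not} argue via a one-sided Kelmans transform: it compares $\rho(G')$ with $\rho(B_{n,\delta+1}^{\kappa'})$ directly through $y^T(A(G'')-A(G'))x$, where $y$ is the Perron vector of the target graph, combined with the two-sided bounds $n-\delta-2<\rho<n-\delta$ of Lemma~\ref{lem::3.2}. Your Kelmans argument cannot be completed without an ingredient of this kind.
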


The proofs of Theorems~\ref{thm::edgenumber} and \ref{thm::1.2} as well as the one of Theorem~\ref{thm::1.3} will be presented in the next two sections, respectively. Since edge-disjoint spanning trees have many applications, we will list some of them in Section~\ref{sec::app}, including rigidity and nowhere-zero flows. Some concluding remarks will be made in Section~\ref{sec::remarks}.

\section{Proofs of Theorems~\ref{thm::edgenumber} and \ref{thm::1.2}}
In this section, we present the proofs of Theorems~\ref{thm::edgenumber} and \ref{thm::1.2}. We first list several lemmas that will be used in the sequel. The following sharp upper bound on the spectral radius was obtained by Hong, Shu and Fang ~\cite{HSF} and Nikiforov~\cite{V.N}, independently.

\begin{lem}[\cite{HSF,V.N}]\label{lem::2.1}
Let $G$ be a graph on $n$ vertices and $m$ edges with minimum degree $\delta\geq 1$. Then
$$\rho(G) \leq \frac{\delta-1}{2}+\sqrt{2 m-n \delta+\frac{(\delta+1)^{2}}{4}},$$
with equality if and only if $G$ is either a $\delta$-regular graph or a bidegreed graph
in which each vertex is of degree either $\delta$ or $n-1$.
\end{lem}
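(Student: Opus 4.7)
The plan is to derive the bound via a quadratic inequality in $\rho = \rho(G)$. After completing the square, the stated estimate is equivalent to
\[
\rho^2 - (\delta-1)\rho \le 2m - (n-1)\delta,
\]
so the goal is to establish this inequality and then solve for $\rho$.

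To prove it, I would work with the positive unit Perron eigenvector $x$ of $A(G)$, where $Ax = \rho x$ and $\|x\|_2 = 1$. The starting identity is $\rho^2 = \|Ax\|_2^2 = \sum_v \bigl(\sum_{u \sim v} x_u\bigr)^2$. Applying Cauchy--Schwarz to each inner sum, $\bigl(\sum_{u \sim v} x_u\bigr)^2 \le d(v)\sum_{u \sim v} x_u^2$, and swapping the order of summation yields $\rho^2 \le \sum_u x_u^2 \sum_{v \sim u} d(v)$. Next, the minimum-degree hypothesis enters: since the $n-1-d(u)$ vertices outside $N[u]$ each have degree at least $\delta$, we obtain
\[
\sum_{v \sim u} d(v) = 2m - d(u) - \sum_{w \notin N[u]} d(w) \le 2m - \delta(n-1) + (\delta - 1)\,d(u).
\]
Combining these two inequalities and reconciling the remaining weighted degree sum $\sum_u d(u) x_u^2$ with $\rho$ (using the Rayleigh identity $\sum_u d(u) x_u^2 = \rho + \sum_{uv \in E}(x_u - x_v)^2$) then yields the target quadratic inequality.

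The equality characterization follows by tracing tightness through this chain. Cauchy--Schwarz is tight exactly when $x$ is constant on every open neighborhood, and the minimum-degree bound is tight exactly when every vertex outside each $N[u]$ has degree exactly $\delta$. In a connected graph these two conditions together leave only two possibilities: either the Perron eigenvector is globally constant, forcing $G$ to be $\delta$-regular, or $V(G)$ partitions into a class of universal vertices (degree $n-1$) on which $x$ takes one value and a class of degree-$\delta$ vertices on which $x$ takes another value, which is precisely the bidegreed extremal family.

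The main obstacle will be the last step of the derivation: naively, the two inequalities combine to give $\rho^2 \le 2m - \delta(n-1) + (\delta-1)\sum_u d(u) x_u^2$, which is \emph{strictly weaker} than the target, because $\sum_u d(u) x_u^2 \ge \rho$ with equality only on edges where $x_u = x_v$. Hence one cannot simply substitute $\rho$ for $\sum_u d(u) x_u^2$. The technical heart of the proof is showing that the Cauchy--Schwarz slack exactly absorbs the extra $(\delta-1)\sum_{uv \in E}(x_u - x_v)^2$ coming from the Rayleigh identity, which requires a careful combinatorial argument that exploits $d(v) \ge \delta$ at every vertex in a grouped rather than vertex-by-vertex fashion.
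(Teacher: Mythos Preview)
This lemma is quoted from \cite{HSF,V.N} and is \emph{not} proved in the paper; it is simply cited. So there is no ``paper's own proof'' to compare against, and your proposal has to stand on its own.

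As a standalone argument, your chain of inequalities is correct up to the point you flag, but the flagged step is a genuine gap, not a routine bookkeeping issue. After your steps (1)--(5) one has exactly
\[
\rho^2 - (\delta-1)\rho \;=\; 2m-(n-1)\delta \;+\; (\delta-1)\,x^{T}Lx \;-\; S_1 \;-\; S_2,
\]
where $S_1=\sum_{\{u,w\}}|N(u)\cap N(w)|\,(x_u-x_w)^2$ is the Cauchy--Schwarz slack (via Lagrange's identity) and $S_2=\sum_u x_u^2\sum_{w\notin N[u]}(d(w)-\delta)$ is the degree-bound slack. Thus the missing ingredient is precisely the inequality
\[
S_1 + S_2 \;\ge\; (\delta-1)\sum_{uv\in E}(x_u-x_v)^2.
\]
You assert that the slacks ``exactly absorb'' the deficit, but give no argument. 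Restricting $S_1$ to adjacent pairs would require every edge $uw$ to lie in at least $\delta-1$ triangles, which is false in general (for instance, a bridge between two cliques has no common neighbours), so any proof of the displayed inequality must genuinely mix the non-edge part of $S_1$ with $S_2$, and it is not clear how to do this. In short, the hard work has been pushed into an unproved lemma.

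The proofs in \cite{HSF,V.N} avoid this obstacle altogether: rather than Cauchy--Schwarz followed by slack accounting, they order the vertices by Perron entry and argue directly with the eigen-equations (Hong--Shu--Fang via a reordering/monotonicity lemma relating degrees to Perron entries, Nikiforov via a short maximum-entry argument), reaching the quadratic $\rho^2-(\delta-1)\rho\le 2m-(n-1)\delta$ without ever needing to compare two separate error terms. If you want a self-contained proof, that route is both shorter and complete.
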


\begin{lem}[\cite{HSF,V.N}]\label{lem::2.2}
For nonnegative integers $p$ and $q$ with $2q \leq p(p-1)$ and $0 \leq x \leq p-1$, the function $f(x)=(x-1) / 2+\sqrt{2 q-p x+(1+x)^{2} / 4}$ is decreasing with respect to $x$.
\end{lem}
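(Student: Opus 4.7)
The plan is to establish the monotonicity by a direct computation of $f'(x)$. Set $g(x) := 2q - px + (1+x)^{2}/4$, so that $g'(x) = -p + (1+x)/2$ and
$$f'(x) = \frac{1}{2} + \frac{g'(x)}{2\sqrt{g(x)}} = \frac{1}{2} - \frac{p - (1+x)/2}{2\sqrt{g(x)}}.$$
For $x \in [0, p-1]$ we have $1 + x \leq p$, hence $p - (1+x)/2 \geq p/2 > 0$ (the degenerate cases $p \leq 1$ are handled trivially, since the hypothesis $2q \leq p(p-1)$ forces $q = 0$). Consequently $f'(x) \leq 0$ is equivalent, after clearing the denominator and squaring the two positive sides, to
$$\left(p - \frac{1+x}{2}\right)^{2} \;\geq\; 2q - px + \frac{(1+x)^{2}}{4}.$$

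Expanding the square on the left gives $p^{2} - p(1+x) + (1+x)^{2}/4$, and the quadratic terms $(1+x)^{2}/4$ cancel against their counterparts on the right. The remaining linear-in-$x$ contributions combine as $-p(1+x) + px = -p$, so the inequality collapses to the $x$-free statement $p^{2} - p \geq 2q$, which is precisely the hypothesis $p(p-1) \geq 2q$. Thus $f'(x) \leq 0$ throughout the interval, yielding the claimed monotonicity.

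I expect no real obstacle here: the content of the lemma is essentially the $x$-independent identity $(p - (1+x)/2)^{2} - ((1+x)^{2}/4 - px) = p(p-1)$, which shows that the sign of $f'$ is governed solely by the slack in $p(p-1) \geq 2q$. The only point that deserves a line of care is that the radical $\sqrt{g(x)}$ be defined on the range in question, so that the squaring step is reversible; in the intended application through Lemma~\ref{lem::2.1} the quantity $2q - px$ plays the role of $2m - n\delta$, which is non-negative for any graph with minimum degree $\delta$, so $g(x) \geq (1+x)^{2}/4 > 0$ automatically. With this caveat noted, the derivative computation above is the entire argument.
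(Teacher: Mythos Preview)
Your derivative computation is correct and is the standard verification of this fact. Note, however, that the paper does not supply its own proof of this lemma: it is quoted verbatim from \cite{HSF,V.N} and used as a black box, so there is no in-paper argument to compare against. Your observation that the radicand need not be nonnegative from the stated hypotheses alone (e.g.\ $p=2$, $q=0$, $x=1$ gives $g(x)=-1$) is well taken; as you say, in every application here one has $2q\ge px$ (since $2m\ge n\delta$), so $g(x)\ge (1+x)^2/4>0$ and the issue does not arise.
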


Recall that $\mathcal{G}_{n,n_1}^{i}$ is the set of graphs obtained from
$K_{n_1}\cup K_{n-n_{1}}$ by adding $i$ edges between $K_{n_1}$ and $K_{n-n_{1}}$.
\begin{lem}\label{lem::2.3}
Let $k\geq 2$ and let $G\in \mathcal{G}_{n,\delta+1}^{k-1}$ where $n\geq 2\delta+3$ and $\delta\geq 2k$. Then
$$n-\delta-2< \rho(G)< n-\delta-1.$$
\end{lem}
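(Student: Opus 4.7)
The plan is to split the claim into its two inequalities, proving the lower bound by Perron--Frobenius subgraph monotonicity and the upper bound by applying Lemma \ref{lem::2.1}.

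For $\rho(G)>n-\delta-2$: since $k\geq 2$, at least one crossing edge is present, so $G$ is connected. The clique $K_{n-\delta-1}$ is a subgraph of $G$ on a strict subset of $V(G)$, so Perron--Frobenius monotonicity on connected graphs yields $\rho(G)>\rho(K_{n-\delta-1})=n-\delta-2$.

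For $\rho(G)<n-\delta-1$, I would first check that $\delta(G)=\delta$: every vertex of $K_{n-\delta-1}$ has degree at least $n-\delta-2\geq \delta+1$ (using $n\geq 2\delta+3$), while among the $\delta+1$ vertices of $K_{\delta+1}$ at most $k-1\leq \delta$ are incident to a crossing edge, so some vertex of $K_{\delta+1}$ still has degree exactly $\delta$. With $e(G)=\binom{\delta+1}{2}+\binom{n-\delta-1}{2}+(k-1)$ and $t:=n-\delta-1$, a direct expansion gives
\[
2e(G)-n\delta+\tfrac{(\delta+1)^{2}}{4}=\left(t-\tfrac{\delta+1}{2}\right)^{2}+2(k-1),
\]
so Lemma \ref{lem::2.1} yields
\[
\rho(G)\leq \tfrac{\delta-1}{2}+\sqrt{\left(t-\tfrac{\delta+1}{2}\right)^{2}+2(k-1)}.
\]
To finish, since $t\geq \delta+2$, both sides of the intended comparison $\rho(G)<t$ remain positive after subtracting $(\delta-1)/2$; squaring then reduces the task to $2(k-1)<2\bigl(t-(\delta+1)/2\bigr)+1$, i.e.\ $t>k+(\delta-2)/2$, which is immediate from $t\geq \delta+2$ and $\delta\geq 2k$.

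The main obstacle is spotting the completed-square simplification of the radicand in Lemma \ref{lem::2.1}; once that is in place, comparison with $n-\delta-1$ and invocation of the hypotheses $n\geq 2\delta+3$ and $\delta\geq 2k$ is routine. Note that the equality case of Lemma \ref{lem::2.1} (regular or bidegreed of degrees $\delta$ and $n-1$) does not occur for $G\in\mathcal{G}_{n,\delta+1}^{k-1}$, but this is immaterial, since the computed upper bound already lies strictly below $n-\delta-1$.
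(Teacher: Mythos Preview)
Your proof is correct and follows essentially the same approach as the paper: subgraph monotonicity for the lower bound and the Hong--Shu--Fang/Nikiforov inequality (Lemma~\ref{lem::2.1}) for the upper bound. The only cosmetic difference is that the paper first replaces $k-1$ by the cruder $\delta/2-1$ and then invokes Lemma~\ref{lem::2.2} to justify plugging in $\delta$, whereas you keep $k-1$ exact, verify directly that $\delta(G)=\delta$, and complete the square---arguably a cleaner route to the same conclusion.
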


\begin{proof}
Note that $G$ contains $K_{\delta+1}\cup K_{n-\delta-1}$ as a proper spanning subgraph and $n\geq 2\delta+3$. Then $\rho(G)>\rho(K_{\delta+1}\cup K_{n-\delta-1})= n-\delta-2$. Since $\delta\geq 2k\geq 4$, it follows that
\begin{equation*}
\begin{aligned}
   e(G)&={\delta+1\choose 2}+{n-\delta-1\choose 2}+k-1\\
   &\leq {\delta+1\choose 2}+{n-\delta-1\choose 2}+\frac{\delta}{2}-1\\ &=\frac{n^2}{2}-\frac{(2\delta+3)n}{2}+\delta^2+\frac{5\delta}{2}.
\end{aligned}
\end{equation*}
Combining this with Lemmas \ref{lem::2.1} and \ref{lem::2.2}, we have
\begin{equation*}
\begin{aligned}
   \rho(G)&\leq \frac{\delta-1}{2}+\sqrt{n^2 + (-3\delta - 3)n + \frac{9\delta^2}{4}+ \frac{11\delta}{2}+\frac{1}{4}}\\
   &= \frac{\delta-1}{2}+\sqrt{\left(n-\frac{3\delta}{2}-\frac{1}{2}\right)^2-2(n-2\delta)}\\
   &< \frac{\delta-1}{2}+ \left(n-\frac{3\delta}{2}-\frac{1}{2}\right)~~(\mbox{since $n\geq 2\delta\!+\!3$})\\
   &=n-\delta-1.
\end{aligned}
\end{equation*}
This completes the proof.
\end{proof}

It is known that if $G$ is connected, then $A(G)$ is irreducible. By the Perron-Frobenius theorem(cf. \cite[Section 8.8]{C.G-2}), the Perron vector $x$ is a positive eigenvector of $A(G)$ with respect to $\rho(G)$. For any $v\in V(G)$, let $N_{G}(v)$ and $d_G(v)$ be the neighborhood and degree of $v$ in $G$, respectively. Let $N_{G}[v]=N_{G}(v)\cup\{v\}$.

\begin{lem}[\cite{H.L-1}]\label{lem::2.4}
Let $G$ be a connected graph, and let $u,v$ be two vertices of $G$. Suppose that $v_{1},v_{2},\ldots,v_{s}\in N_{G}(v)\backslash N_{G}(u)$ with $s\geq 1$, and $G^*$ is the graph obtained from $G$ by deleting the edges $vv_{i}$ and adding the edges $uv_{i}$ for $1\leq i\leq s$. Let $x$ be the Perron vector of $A(G)$. If $x_{u}\geq x_{v}$, then $\rho(G)<\rho(G^*)$.
\end{lem}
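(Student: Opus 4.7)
The plan is to compare the Rayleigh quotients of $A(G)$ and $A(G^*)$ evaluated at the Perron vector $x$ of $A(G)$. The two adjacency matrices differ only in the symmetric entries indexed by the pairs $(u, v_i)$ (added) and $(v, v_i)$ (removed) for $1 \leq i \leq s$, so a direct expansion yields
$$x^{T} A(G^*) x - x^{T} A(G) x = 2(x_u - x_v) \sum_{i=1}^{s} x_{v_i}.$$
Since $G$ is connected, by Perron--Frobenius the vector $x$ is strictly positive, and combined with the hypothesis $x_u \geq x_v$ this makes the right-hand side nonnegative.

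By the Rayleigh characterization for symmetric matrices one has $\rho(G^*) \geq x^{T} A(G^*) x / (x^{T} x)$, while $\rho(G) = x^{T} A(G) x / (x^{T} x)$ since $x$ is the Perron vector of $A(G)$. These two inequalities immediately give $\rho(G^*) \geq \rho(G)$. To upgrade this to a strict inequality, I would argue by contradiction: suppose $\rho(G^*) = \rho(G)$. Then equality holds throughout the Rayleigh inequality, which forces $x$ to lie in the eigenspace of $A(G^*)$ corresponding to $\rho(G^*)$. In particular $A(G^*) x = \rho(G^*) x = \rho(G) x = A(G) x$, so $(A(G^*) - A(G)) x = 0$. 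Reading the $u$-coordinate of this vector identity and using that $N_{G^*}(u) = N_G(u) \cup \{v_1, \ldots, v_s\}$ with $\{v_1, \ldots, v_s\}$ disjoint from $N_G(u)$, one obtains $\sum_{i=1}^{s} x_{v_i} = 0$, contradicting $x > 0$ and $s \geq 1$.

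The main obstacle is precisely this strict-inequality step: the Rayleigh computation alone only delivers $\geq$, and the equality case must be ruled out separately. A convenient feature of the approach is that it does not require $G^*$ to be connected, since equality in the Rayleigh inequality by itself forces $x$ to be an eigenvector of $A(G^*)$, after which examining a single coordinate of the eigenvalue equation produces the desired contradiction.
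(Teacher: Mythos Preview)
Your argument is correct: the Rayleigh-quotient comparison gives $\rho(G^*)\ge\rho(G)$, and your treatment of the equality case---forcing $x$ into the top eigenspace of $A(G^*)$ and then reading off the $u$-coordinate of $(A(G^*)-A(G))x=0$ to obtain $\sum_{i=1}^s x_{v_i}=0$---is a clean way to secure strictness.

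Note, however, that the paper does not supply its own proof of this lemma; it is quoted from \cite{H.L-1} (Liu, Lu and Tian) and used as a black box. Your proposal is essentially the standard argument behind this well-known ``edge-switching'' lemma, so there is nothing to compare against within the present paper.
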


For any vertex $v\in V(G)$ and any subset $S\subseteq V(G)$, let $N_{S}(v)=N_{G}(v)\cap S$ and $d_{S}(v)=|N_{S}(v)|$.

\begin{lem}\label{lem::2.5}
Let $k\geq 2$ and let $G\in \mathcal{G}_{n,\delta+1}^{k-1}$ where $n\geq 2\delta+3$ and $\delta\geq 2k$. Then $\rho(G)\leq \rho(B_{n,\delta+1}^{k-1})$, with equality if and only if $G\cong B_{n,\delta+1}^{k-1}$.
\end{lem}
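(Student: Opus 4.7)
The plan is to let $G \in \mathcal{G}_{n,\delta+1}^{k-1}$ attain the maximum spectral radius, let $x$ denote its Perron eigenvector, and to show that $G \cong B_{n,\delta+1}^{k-1}$. Write $V_1 = V(K_{\delta+1})$ and $V_2 = V(K_{n-\delta-1})$, so that the $k-1$ cross-edges are precisely the edges with one endpoint in each $V_i$. Pick $u^* \in V_1$ with $x_{u^*} = \max_{u \in V_1} x_u$. The goal is to force every cross-edge to be incident to $u^*$; the simplicity of $G$ then makes the $V_2$-endpoints distinct, yielding $G \cong B_{n,\delta+1}^{k-1}$.

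First I would invoke Lemma~\ref{lem::2.4} to establish that $N_{V_2}(v) \subseteq W := N_{V_2}(u^*)$ for every $v \in V_1 \setminus \{u^*\}$. Otherwise some $v$ has a cross-neighbor $w \in N_{V_2}(v) \setminus N_{V_2}(u^*)$; since $w \notin N_G(u^*)$ but $w \in N_G(v)$, Lemma~\ref{lem::2.4} produces $G' \in \mathcal{G}_{n,\delta+1}^{k-1}$ with $\rho(G') > \rho(G)$, contradicting extremality. Setting $d^* = |W|$, if $d^* = k-1$ then the $k-1$ cross-edges at $u^*$ fill the quota, no other $V_1$-vertex carries a cross-edge, and $G \cong B_{n,\delta+1}^{k-1}$.

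The remaining case $d^* < k-1$ is where I expect the main obstacle. Here some $v \in V_1 \setminus \{u^*\}$ has a cross-neighbor $w \in W$, so $w$ has cross-degree at least $2$, and since $|V_2 \setminus W| \geq n - \delta - k \geq \delta + 3 - k > 0$, there exists $w' \in V_2 \setminus W$. Consider the modification $G^*$ obtained from $G$ by deleting $vw$ and adding $u^*w'$; clearly $G^* \in \mathcal{G}_{n,\delta+1}^{k-1}$. By the Rayleigh principle,
\[
\rho(G^*) - \rho(G) \;\geq\; \frac{2\bigl(x_{u^*} x_{w'} - x_v x_w\bigr)}{x^T x}.
\]
Using the eigen-equations $(\rho+1)x_u = \sum_{u' \in V_1} x_{u'} + \sum_{w'' \in N_{V_2}(u)} x_{w''}$ for $u \in V_1$ and $(\rho+1)x_w = \sum_{w'' \in V_2} x_{w''} + \sum_{u' \in N_{V_1}(w)} x_{u'}$ for $w \in V_2$, together with the bound $\rho < n-\delta-1$ of Lemma~\ref{lem::2.3} and the containment $\{u^*,v\} \subseteq N_{V_1}(w)$, I would derive the strict inequality $x_{u^*} x_{w'} > x_v x_w$, contradicting extremality.

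The delicate subcase is $N_{V_2}(v) = W$, where $x_v = x_{u^*}$ by symmetry and the naive estimate fails because $x_{w'} < x_w$. To treat it, I would apply Lemma~\ref{lem::2.4} symmetrically within $V_2$ (with respect to the Perron-maximal vertex of $V_2$), obtaining that the $V_1$-cross-neighborhoods on the $V_2$-side are also nested. The resulting double-nested ``staircase'' cross-structure is extremely rigid: in this symmetric situation it is forced to be a complete bipartite graph $K_{p,q}$ with $pq = k-1$ and $p \geq 2$, and I would conclude by a direct comparison of the characteristic polynomials of the equitable partitions of $G$ and $B_{n,\delta+1}^{k-1}$ to obtain $\rho(B_{n,\delta+1}^{k-1}) > \rho(G)$.
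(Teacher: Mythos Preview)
Your overall plan---take $G$ extremal, establish nested cross-neighborhoods via Lemma~\ref{lem::2.4}, and then rule out any non-$B$ configuration---matches the paper's opening moves, but the decisive step diverges, and the divergence is where the proposal has a real gap.

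In the case $d^*<k-1$ you propose the single-edge swap $G^*=G-vw+u^*w'$ and want $x_{u^*}x_{w'}>x_vx_w$. You have $x_{u^*}\ge x_v$, but you also have $x_{w'}<x_w$ (the vertex $w$ has extra neighbors $u^*,v,\ldots$), so the inequality is a genuine competition of ratios. The only quantitative inputs you list are the eigen-equations, the bound $\rho<n-\delta-1$, and $\{u^*,v\}\subseteq N_{V_1}(w)$; but the last of these enlarges $x_w$ and therefore works \emph{against} you, and nothing you wrote controls how many $V_1$-vertices meet $w$ compared to how many $W$-vertices $v$ misses. In particular, when $v$ misses only one vertex of $W$ while $w$ is hit by several $V_1$-vertices, $(x_{u^*}-x_v)$ is a single $V_2$-entry over $\rho+1$ whereas $(x_w-x_{w'})$ is a sum of several $V_1$-entries over $\rho+1$; you would need a uniform comparison of $V_2$-entries to $V_1$-entries to push this through, and that is not provided. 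The ``delicate subcase'' you single out is then just the extreme instance of a problem that is already present for general $v$, and handing it off to an unspecified characteristic-polynomial computation does not close the gap.

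The paper sidesteps this entirely with a different device: after establishing the nesting on \emph{both} sides it does not look for a single-edge swap at all. Instead it performs one global modification taking $G'$ directly to $G''\cong B_{n,\delta+1}^{k-1}$ (remove all cross-edges not at $u_1$, add the missing edges $u_1v_{r+1},\ldots,u_1v_{k-1}$), and then uses the identity
\[
y^{T}(\rho''-\rho')\,x \;=\; y^{T}\bigl(A(G'')-A(G')\bigr)x,
\]
where $x$ is the Perron vector of $G'$ and $y$ is the Perron vector of $G''$. Because $y$ belongs to the \emph{target} graph $B_{n,\delta+1}^{k-1}$, the structure of $B$ gives the clean relation $y_{u_2}=y_{u_1}/(\rho''-\delta+1)$; on the $G'$ side one gets $x_{v_{r+1}}\ge x_{v_1}/(\rho'-(n-\delta-3))$. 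Lemma~\ref{lem::2.3} applied to both graphs yields $\rho''-\rho'>-1$, and combining these three facts makes $y^{T}(A(G'')-A(G'))x>0$, hence $\rho''>\rho'$, the desired contradiction. The point is that mixing the two Perron vectors lets one trade a hard inequality in a single eigenspace (your $x_{u^*}x_{w'}>x_vx_w$) for two easy one-sided estimates coming from two different eigen-equations. Your Rayleigh approach uses only $x$, which is why you are forced into the delicate product comparison and the separate $K_{p,q}$ analysis; the paper's two-vector identity handles all cases uniformly in a few lines.
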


\begin{proof}
Suppose that $G'$ is the graph that attains the maximum spectral radius in $\mathcal{G}_{n,\delta+1}^{k-1}$. For every $G\in \mathcal{G}_{n,\delta+1}^{k-1}$, we have
\begin{equation}\label{equ::1}
\begin{aligned}
\rho(G)\leq \rho(G').
\end{aligned}
\end{equation}
We partition $V(G')$ into $V_1\cup V_2$ with $V_1=V(K_{\delta+1})=\{u_1,u_2,\ldots,u_{\delta+1}\}$ and $V_2=V(K_{n-\delta-1})=\{v_{1},v_2,\ldots,v_{n-\delta-1}\}$. Let $x$ be the Perron vector of $A(G')$, and let $\rho'=\rho(G')$. Without loss of generality, we assume that $x_{u_{i+1}}\leq x_{u_{i}}$ and $x_{v_{j+1}}\leq x_{v_{j}}$ for $1\leq i\leq \delta$ and $1\leq j\leq n-\delta-2$. We assert that $N_{G'}(u_{i+1})\subseteq N_{G'}[u_i]$ and $N_{G'}(v_{j+1})\subseteq N_{G'}[v_j]$ for $1\leq i\leq \delta$ and $1\leq j\leq n-\delta-2$. If not, suppose that there exist $i,j$ with $i<j$ such that $N_{G'}(u_j)\nsubseteq N_{G'}[u_i]$. Let
$w\in N_{G'}(u_j)\backslash N_{G'}[u_i]$ and $G^{*}=G'-wu_{j}+wu_{i}$. Note that $x_{u_{j}}\leq x_{u_{i}}$. Then $\rho(G^{*})>\rho(G')$ by Lemma \ref{lem::2.4}, which contradicts the maximality of $\rho'$. This implies that  $N_{G'}(u_{i+1})\subseteq N_{G'}[u_i]$ for $1\leq i\leq \delta$. Similarly, we can deduce that $N_{G'}(v_{j+1})\subseteq N_{G'}[v_j]$ for $1\leq j\leq n-\delta-2$. Furthermore, we have
$N_{V_2}(u_{i+1})\subseteq N_{V_2}(u_i)$ and $N_{V_1}(v_{j+1})\subseteq N_{V_1}(v_j)$ for $1\leq i\leq \delta$ and $1\leq j\leq n-\delta-2$.
Let $d_{V_2}(u_1) =r$, $d_{V_2}(u_2)=t$, and $d_{V_1}(v_1)=s$. Again by the maximality of $\rho'$ and Lemma \ref{lem::2.4}, we have $N_{V_2}(u_1)=\{v_1,v_2,\dots,v_r\}$, $N_{V_2}(u_2)=\{v_1,v_2,\dots,v_t\}$ and $N_{V_1}(v_1)=\{u_1,u_2,\dots,u_s\}$. If $r=k-1$ or $s=1$, then $G'\cong B_{n,\delta+1}^{k-1}$. Combining this with (\ref{equ::1}), we can deduce that $\rho(G)\leq\rho(B_{n,\delta+1}^{k-1})$, with  equality if and only if $G\cong B_{n,\delta+1}^{k-1}$, as required. Next we consider $r\leq k-2$ and $s\geq 2$ in the following. Note that $x_{v_i}=x_{v_{r+1}}$ for $r+1\leq i\leq n-\delta-1$, $x_{v_j}\geq x_{v_{r+1}}$ for $2\leq j\leq r$. Then, by $A(G')x=\rho' x$, we have
\begin{eqnarray*}
\rho' x_{v_{r+1}} =x_{v_1}+\sum_{2\leq i\leq r}x_{v_i}+(n-\delta-r-2)x_{v_{r+1}}\geq x_{v_1}+(n-\delta-3)x_{v_{r+1}}.
\end{eqnarray*}
Note that $G'\in\mathcal{G}_{n,\delta+1}^{k-1}$. Then $\rho'> n-\delta-2$ by Lemma \ref{lem::2.3}, and hence
\begin{eqnarray}
 x_{v_{r+1}}\geq\frac{x_{v_1}}{\rho'-(n-\delta-3)}. \label{equ::2}
\end{eqnarray}
Assume that $E_1=\{u_1v_{i}|~r+1\leq i\leq k-1\}$ and $E_2=\{u_{i}v_j\in G' |~ 2\leq i\leq s, 1\leq j\leq t\}$. Let $G''=G'-E_2+E_1$. Clearly, $G''\cong B_{n,\delta+1}^{k-1}$. Let $y$ be the Perron vector of $A(G'')$, and let $\rho''=\rho(G'')$. By $A(G'')y=\rho'' y$, we have
\begin{eqnarray*}
 &\rho''y_{u_2}=y_{u_1}+(\delta-1)y_{u_2}.
\end{eqnarray*}
Note that $G''\in\mathcal{G}_{n,\delta+1}^{k-1}$ and $n\geq2\delta+3$. Again by Lemma \ref{lem::2.3}, $\rho''> n-\delta-2> \delta-1$, and hence
\begin{eqnarray}
 y_{u_2}=\frac{y_{u_1}}{\rho''-\delta+1}. \label{equ::3}
\end{eqnarray}
Since $G',G''\in \mathcal{G}_{n,\delta+1}^{k-1}$, by Lemma \ref{lem::2.3}, $\rho''\!-\!\rho'>-1$. Note that $x_{u_1}\geq x_{u_i}$ for $2\leq i\leq s$, $x_{v_1}\geq x_{v_j}$ for $2\leq j\leq t$. Combining this with (\ref{equ::2}), (\ref{equ::3}) and $\rho''\!-\!\rho'>-1$, we have

\begin{equation*}
\begin{aligned}
y^{T}(\rho''-\rho')x &=y^{T}(A(G'')-A(G'))x\\
   &=\sum_{u_1v_i\in E_1}(x_{u_{1}}y_{v_{i}}\!+\!x_{v_{i}}y_{u_{1}})\!
-\sum_{u_iv_j\in E_2}\!(x_{u_{i}}y_{v_{j}}\!+\!x_{v_{j}}y_{u_{i}})\\
   &\geq(k\!-\!1-r)(x_{u_1}y_{v_1}+x_{v_{r+1}}y_{u_1}-x_{u_1}y_{v_1}-x_{v_1}y_{u_2}) ~~(\mbox{since $r\leq k-2$})\\
   &=(k\!-\!1-r)(x_{v_{r+1}}y_{u_1}-x_{v_1}y_{u_2})\\
   &\geq(k\!-\!1-r)x_{v_{1}}y_{u_1}\left(\frac{1}{\rho'-(n-\delta-3)}-\frac{1}{\rho''-\delta+1}\right)
   ~~(\mbox{by (\ref{equ::2}) and (\ref{equ::3})})\\
   &=\frac{(k\!-\!1-r)x_{v_{1}}y_{u_1}}{(\rho'-(n-\delta-3))(\rho''-\delta+1)}(\rho''-\rho'+n-2\delta-2)\\
    &>0~~(\mbox{since $k\geq r\!+\!2$, $\rho'>n\!-\!\delta\!-\!2$, $\rho''>\delta\!-\!1$ and $n\geq 2\delta\!+\!3$}),
\end{aligned}
\end{equation*}
and hence $\rho''>\rho'$, which contradicts the maximality of $\rho'$. This completes the proof.
\end{proof}
For $X,Y\subseteq V(G)$, we denote by $E_{G}(X,Y)$ the set of edges with one endpoint in $X$ and one endpoint in $Y$, and $e_{G}(X,Y)=|E_{G}(X,Y)|$.
\begin{lem}\label{lem::2.6}
Let $k\geq 2$, and let $G\in \mathcal{G}_{n,a}^{k-1}$ where $n\geq 2a$, $a\geq \delta+2$ and $\delta\geq 2k$. Then $\rho(G)<\rho(B_{n,\delta+1}^{k-1}).$
\end{lem}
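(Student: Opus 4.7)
My approach is to upper bound $\rho(G)$ via Lemma~\ref{lem::2.1} and lower bound $\rho(B_{n,\delta+1}^{k-1})$ via Lemma~\ref{lem::2.3}, separating them at the value $n-\delta-2$. The split works because $G$ has a larger ``smaller side'' ($K_a$ with $a\geq\delta+2$) than $B_{n,\delta+1}^{k-1}$ (which has $K_{\delta+1}$), and the edge-sum function $\binom{x}{2}+\binom{n-x}{2}$ is strictly decreasing on $[1,n/2]$, so pushing $\delta+1$ up to $a$ loses edges as well as increasing the smaller clique, both of which tend to reduce the spectral radius.

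First, I would observe that every vertex in $K_a$ has degree at least $a-1$, and every vertex in $K_{n-a}$ has degree at least $n-a-1\geq a-1$ (using $n\geq 2a$), so $\delta(G)\geq a-1$. With $e(G)=\binom{a}{2}+\binom{n-a}{2}+k-1$, Lemmas~\ref{lem::2.1} and~\ref{lem::2.2} give
\begin{equation*}
\rho(G)\leq \frac{a-2}{2}+\sqrt{2e(G)-n(a-1)+\frac{a^2}{4}}=\frac{a-2}{2}+\sqrt{\left(n-\frac{3a}{2}\right)^2+2(k-1)},
\end{equation*}
where the simplification under the radical uses the identity $(n-a)(n-2a)+\frac{a^2}{4}=\left(n-\frac{3a}{2}\right)^2$.

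Next I would verify that this upper bound is strictly less than $n-\delta-2$. Isolating the square root and noting $n-\delta-1-\frac{a}{2}>0$ (since $n\geq 2a\geq 2\delta+4$), I would square both sides (both positive) and apply the difference-of-squares factorization, reducing the desired strict inequality to
\begin{equation*}
(a-\delta-1)\bigl(2n-2a-\delta-1\bigr)>2(k-1).
\end{equation*}
From $a\geq\delta+2$ we get $a-\delta-1\geq 1$, and from $n\geq 2a$ and $a\geq\delta+2$ we get $2n-2a-\delta-1\geq 2a-\delta-1\geq \delta+3\geq 2k+3$, so the product is at least $\delta+3>2(k-1)$.

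Finally, since $B_{n,\delta+1}^{k-1}\in\mathcal{G}_{n,\delta+1}^{k-1}$, Lemma~\ref{lem::2.3} yields $\rho(B_{n,\delta+1}^{k-1})>n-\delta-2$, so chaining gives $\rho(G)<n-\delta-2<\rho(B_{n,\delta+1}^{k-1})$. The main obstacle is choosing the separation value $n-\delta-2$ and spotting the difference-of-squares step that avoids expanding the squared inequality in full; once that substitution is in place the parameter estimates are immediate.
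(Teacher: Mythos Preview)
Your argument is correct, and it takes a genuinely different route from the paper. The paper proves Lemma~\ref{lem::2.6} by a graph-transformation argument: it selects a $(\delta+1)$-subset $A_1\subseteq V(K_a)$ containing all cross-edge endpoints, and uses the Perron vector together with a Rayleigh-quotient comparison to show that deleting the $A_1$--$A_2$ edges and joining $A_2$ to $V(K_{n-a})$ (or the symmetric operation on the other side) strictly increases the spectral radius; the resulting graph lies in $\mathcal{G}_{n,\delta+1}^{k-1}$, and then Lemma~\ref{lem::2.5} finishes. Your approach instead bypasses both the Perron-vector machinery and Lemma~\ref{lem::2.5}: you bound $\rho(G)$ above analytically via Lemmas~\ref{lem::2.1}--\ref{lem::2.2} applied at $x=a-1$, bound $\rho(B_{n,\delta+1}^{k-1})$ below by Lemma~\ref{lem::2.3}, and separate the two at the value $n-\delta-2$ by a clean difference-of-squares estimate. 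Your proof is shorter and more self-contained; the paper's structural argument, on the other hand, is less sensitive to the specific numerical slack and would remain applicable in settings where the Hong--Shu--Fang/Nikiforov bound does not leave enough room for such a clean separation.
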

\begin{proof}
Let $G\in \mathcal{G}_{n,a}^{k-1}$. Then $V(G)=V(K_a)\cup V(K_{n-a})$ and $e_{G}(V(K_a),V(K_{n-a}))=k-1$. We partition $V(K_a)$ into $A_1\cup A_{2}$ and $V(K_{n-a})$ into $B_1\cup B_{2}$ such that $|A_1|=|B_1|=\delta+1$, $e_{G}(A_2, V(K_{n-a}))=0$ and $e_{G}(B_2, V(K_{a}))=0$. Since $n\geq 2a$, $a\geq \delta+2$ and $\delta\geq 2k$, it follows that $|A_2|=|V(K_a)|-|A_1|\geq 1$ and $|B_2|=|V(K_{n-a})|-|B_1|\geq 1$. Let $x$ be the Perron vector of $A(G)$. If $\sum_{v\in A_1}x_{v}<\sum_{v\in V(K_{n-a})}x_{v}$, then let $G'$ be a graph obtained from $G$ by deleting all edges between $A_1$ and $A_2$, and adding all possible edges between $A_2$ and $V(K_{n-a})$. Clearly, $G'\in \mathcal{G}_{n,\delta+1}^{k-1}$ and
\begin{equation*}
\begin{aligned}
\rho(G')-\rho(G)&\geq x^{T}(A(G')-A(G))x\\
                            &=2\sum_{v\in A_2}x_{v}\left(\sum_{v\in V(K_{n-a})}x_{v}-\sum_{v\in A_1}x_{v}\right)\\
                            &>0,
\end{aligned}
\end{equation*}
and hence $\rho(G')>\rho(G)$. Combining this with Lemma \ref{lem::2.5}, we have $\rho(B_{n,\delta+1}^{k-1})\geq\rho(G')>\rho(G)$, as required. If $\sum_{v\in A_1}x_{v}\geq\sum_{v\in V(K_{n-a})}x_{v}$, since $\sum_{v\in A_2}x_{v}>0$ and $\sum_{v\in B_2}x_{v}>0$, we have
$$\sum_{v\in V(K_{a})}x_{v}>\sum_{v\in A_1}x_{v}\geq\sum_{v\in V(K_{n-a})}x_{v}>\sum_{v\in B_1}x_{v}.$$
Let $G''$ be a graph obtained from $G$ by deleting all edges between $B_1$ and $B_2$, and adding all possible edges between $V(K_{a})$ and $B_2$. One can verify that $G''\in \mathcal{G}_{n,\delta+1}^{k-1}$ and
\begin{equation*}
\begin{aligned}
\rho(G'')-\rho(G)&\geq x^{T}(A(G'')-A(G))x\\
                            &=2\sum_{v\in B_2}x_{v}\left(\sum_{v\in V(K_a)}x_{v}-\sum_{v\in B_1}x_{v}\right)\\
                            &>0,
\end{aligned}
\end{equation*}
and hence $\rho(G'')>\rho(G)$. Similarly, $\rho(B_{n,\delta+1}^{k-1}) \geq\rho(G'') >\rho(G)$. This completes the proof.
\end{proof}

\begin{lem}\label{lem::2.7}
Let $a$ and $b$ be two positive integers. If $a\geq b$, then
$${a\choose 2}+{b\choose 2}< {a+1\choose 2}+{b-1\choose 2}.$$
\end{lem}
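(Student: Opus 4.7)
The plan is to prove this elementary inequality by direct computation of the difference between the two sides. The key observation is that shifting one unit from the smaller binomial argument $b$ to the larger binomial argument $a$ strictly increases the sum $\binom{a}{2}+\binom{b}{2}$ whenever $a\geq b$, by an amount that is easy to evaluate.

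First I would apply the basic identity $\binom{m+1}{2}-\binom{m}{2}=m$, which gives
\[
\binom{a+1}{2}-\binom{a}{2}=a \qquad\text{and}\qquad \binom{b}{2}-\binom{b-1}{2}=b-1.
\]
Subtracting and rearranging,
\[
\left(\binom{a+1}{2}+\binom{b-1}{2}\right)-\left(\binom{a}{2}+\binom{b}{2}\right)=a-(b-1)=a-b+1.
\]
Since $a\geq b$, the right-hand side is at least $1$, and in particular strictly positive, which yields the claimed strict inequality.

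The only subtlety worth checking is the boundary case $b=1$, where $\binom{b-1}{2}=\binom{0}{2}=0$ under the standard convention; the identity above remains valid. There is no real obstacle here, so the proof reduces to this one-line computation.
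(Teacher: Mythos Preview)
Your proof is correct and essentially identical to the paper's own argument: both compute the difference between the two sides, obtain $a-b+1$, and observe that this is strictly positive since $a\geq b$. The paper does this in one line without the intermediate identity, but the substance is the same.
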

\begin{proof}
Note that $a\geq b$. Then
\begin{equation*}
\begin{aligned}
&{a+1\choose 2}+{b-1\choose 2}\!-\!{a\choose 2}\!-\!{b\choose 2}=a-b+1>0.
\end{aligned}
\end{equation*}
Thus the result follows.\end{proof}

Denote by $\partial_{G}(X)=E_{G}(X,V(G)-X)$.

\begin{lem}[Lemma~2.8 of \cite{Gu-Lai}]\label{lem::2.8}
Let $G$ be a graph with minimum degree $\delta$ and $U$ be a non-empty proper subset of $V(G)$. If $|\partial_{G}(U)|\leq \delta-1$, then $|U|\geq \delta+1$.
\end{lem}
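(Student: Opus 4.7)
The plan is to argue by contradiction: assume $|U|\leq \delta$ and derive a contradiction with the hypothesis $|\partial_{G}(U)|\leq \delta-1$. The key identity is a simple double count. Each edge of $\partial_{G}(U)$ has exactly one endpoint in $U$, so
$$|\partial_{G}(U)|=\sum_{u\in U}\bigl(d_{G}(u)-d_{U}(u)\bigr).$$
Since $d_{G}(u)\geq \delta$ for every $u\in U$ and $d_{U}(u)\leq |U|-1$, each summand is at least $\delta-|U|+1$. Thus
$$|\partial_{G}(U)|\geq |U|\bigl(\delta-|U|+1\bigr).$$

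Next I would verify that the right-hand side is at least $\delta$ for every integer $|U|$ in the range $1\leq |U|\leq \delta$. Treating $f(x)=x(\delta+1-x)$ as a real quadratic, $f$ is concave with $f(1)=f(\delta)=\delta$, so $f(x)\geq \delta$ on the integer interval $\{1,2,\ldots,\delta\}$. Substituting $x=|U|$ gives $|\partial_{G}(U)|\geq \delta$, which contradicts $|\partial_{G}(U)|\leq \delta-1$. Hence the assumption $|U|\leq \delta$ fails, and $|U|\geq \delta+1$.

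There is no real obstacle here: the argument is a one-line double count followed by an elementary observation about the quadratic $x(\delta+1-x)$. The only point worth flagging is that both extreme cases $|U|=1$ (a single vertex of degree at least $\delta$ contributes $\delta$ boundary edges) and $|U|=\delta$ (each of $\delta$ vertices must have at least one neighbour outside, again contributing at least $\delta$ boundary edges) attain the threshold $\delta$, which explains why the hypothesis $|\partial_{G}(U)|\leq \delta-1$ is exactly what forces $|U|$ past $\delta$.
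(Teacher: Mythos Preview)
Your argument is correct. The double-count
\[
|\partial_G(U)|=\sum_{u\in U}\bigl(d_G(u)-d_U(u)\bigr)\ge |U|(\delta-|U|+1)
\]
together with the observation that the concave quadratic $f(x)=x(\delta+1-x)$ satisfies $f(x)\ge f(1)=f(\delta)=\delta$ for $1\le x\le \delta$ does exactly what is needed.

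Note, however, that the paper does not supply its own proof of this lemma: it is quoted as Lemma~2.8 of \cite{Gu-Lai} and used as a black box. Your approach is the standard one and is essentially how the cited reference proves it as well, so there is nothing to compare against here beyond confirming that your proof is sound.
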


For any partition $\pi$ of $V(G)$, let $E_{G}(\pi)$  denote the set of edges in $G$ whose endpoints lie in different parts of $\pi$, and let $e_{G}(\pi)=|E_{G}(\pi)|$. A part is \textit{trivial} if it contains a single vertex. The following fundamental theorem on spanning tree packing number of a graph was established by Nash-Williams \cite{Nash-Williams} and Tutte \cite{Tutte}, independently.
\begin{thm}[Nash-williams~\cite{Nash-Williams} and Tutte~\cite{Tutte}]
\label{lem::2.9}
Let $G$ be a connected graph. Then $\tau(G)\geq k$ if and only if for any partition $\pi$ of $V(G)$, $$e_{G}(\pi)\geq k(t-1),$$
where $t$ is the number of parts in the partition $\pi$.
\end{thm}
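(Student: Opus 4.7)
The plan is to prove the two implications of the Nash-Williams--Tutte theorem separately. The forward direction is elementary: suppose $T_1,\dots,T_k$ are edge-disjoint spanning trees of $G$ and let $\pi$ be any partition of $V(G)$ into $t$ parts. Contract each part of $\pi$ to a single vertex; since each $T_i$ is a spanning tree of $G$, its image under the contraction is a connected (multi)graph on $t$ vertices and therefore contributes at least $t-1$ edges crossing $\pi$. As the trees are pairwise edge-disjoint, summing gives $e_G(\pi)\ge k(t-1)$.

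For the converse I would invoke Edmonds' matroid union theorem applied to the cycle matroid $M=M(G)$, whose rank function is $r(F)=n-c(F)$, where $c(F)$ is the number of connected components of the spanning subgraph $(V(G),F)$. Writing $M^{\vee k}$ for the union of $k$ disjoint copies of $M$, Edmonds' formula reads
$$r_{M^{\vee k}}(E(G))=\min_{F\subseteq E(G)}\bigl(|E(G)\setminus F|+k\cdot r(F)\bigr).$$
The existence of $k$ edge-disjoint spanning trees is equivalent to $r_{M^{\vee k}}(E(G))\ge k(n-1)$, which after substituting $r(F)=n-c(F)$ and simplifying becomes the inequality $|E(G)\setminus F|\ge k(c(F)-1)$ for every $F\subseteq E(G)$. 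To verify this under the partition hypothesis, fix $F$ and let $\pi_F$ be the partition of $V(G)$ into the connected components of $(V(G),F)$, so that the number of parts equals $c(F)$. Every edge of $G$ that crosses $\pi_F$ necessarily lies in $E(G)\setminus F$, so $|E(G)\setminus F|\ge e_G(\pi_F)\ge k(c(F)-1)$, which is exactly what Edmonds' formula demands.

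The main obstacle is the invocation of Edmonds' matroid union theorem itself; its proof is non-trivial but standard, proceeding by an exchange argument on pairs of independent sets. A self-contained alternative, closer to the original work of Nash-Williams and Tutte, is to take a maximum collection of $k$ edge-disjoint forests and show, by an augmenting-path style argument, that if any forest fails to span then one can exhibit a partition $\pi$ with $e_G(\pi)<k(t-1)$, contradicting the hypothesis. I would prefer the matroid-union route, since the partition interpretation of the witness set $F$ arises naturally from the rank function and cleanly separates the general matroidal content from its graphic specialization.
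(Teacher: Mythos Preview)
Your proof is correct on both directions: the forward implication by counting crossing edges of each tree after contraction, and the converse via Edmonds' matroid union applied to the cycle matroid, with the witness set $F$ read off as the partition into components. The alternative augmenting-forest argument you sketch is also a valid route.

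However, there is nothing in the paper to compare against. The paper does not supply its own proof of this theorem; it merely states it as the classical Nash-Williams--Tutte theorem, with citations to \cite{Nash-Williams} and \cite{Tutte}, and then invokes it as a tool in the proofs of Theorems~\ref{thm::edgenumber} and~\ref{thm::1.2}. So while your argument is sound, the comparison task is vacuous here.
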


For $X\subseteq V(G)$, let $G[X]$ be the subgraph of $G$ induced by $X$, and let $e_{G}(X)$ be the number of edges in $G[X]$.
Now, we shall give the proofs of Theorems \ref{thm::edgenumber} and \ref{thm::1.2}.
\begin{proof}[\bf Proof of Theorem \ref{thm::edgenumber}]
Assume to the contrary that $\tau(G)\leq k-1$. By Theorem \ref{lem::2.9}, there exists a partition $\pi$ of $V(G)$ with $t_1$ trivial parts $v_1,v_2,\ldots, v_{t_1}$ and $t_2$ nontrivial parts $V_1,V_2,\ldots,V_{t_2}$ such that
\begin{equation}\label{equ::4}
\begin{aligned}
e_{G}(\pi)\leq k(t-1)-1,
\end{aligned}
\end{equation}
where $t=t_1+t_2$. If the partition $\pi$ contains only one part, then $e_{G}(\pi)\leq -1$ by (\ref{equ::4}), which is impossible because $e_{G}(\pi)=0$. This implies that $t\geq 2$. We assert that $t_2\geq 2$. If not, suppose that $t_2\leq 1$. Then $t_1\geq t-1$. Note that $d_{G}(v_i)\geq \delta$ for $1\leq i\leq t_1$. Combining this with $\delta\geq 2k$, we have
\begin{equation*}
\begin{aligned}
e_{G}(\pi)\geq \frac{1}{2}\sum_{1\leq i\leq t_1}d_{G}(v_i)\geq \frac{1}{2}\delta t_1\geq k(t-1),
\end{aligned}
\end{equation*}
which contradicts (\ref{equ::4}). This implies that $t_2\geq 2$. If the partition $\pi$ contains at most one nontrivial part, say $V_j$ ($1\leq j\leq t_2$), such that  $|\partial_{G}(V_j)|\leq \delta-1$. Then $|\partial_{G}(V_i)|\geq \delta$ for all $i\in\{1,\ldots,t_2\}\backslash\{j\}$. Since $G$ is connected, it follows that $|\partial_{G}(V_j)|\geq 1$, and hence
\begin{equation*}
\begin{aligned}
2e_{G}(\pi)&= \sum_{1\leq i\leq t_2}|\partial_{G}(V_i)|+\sum_{1\leq j\leq t_1}d_{G}(v_j)\\
&\geq (t_2-1)\delta+1+\delta t_1\\
&= \delta(t-1)+1~~(\mbox{since $t=t_1+t_2$})\\
&\geq 2k(t-1)+1 ~~(\mbox{since $\delta\geq 2k$}),
\end{aligned}
\end{equation*}
which also contradicts (\ref{equ::4}). Therefore, the partition $\pi$ contains at least two nontrivial parts, say $V_1,V_2$, such that $|\partial_{G}(V_i)|\leq \delta-1$ for $i=1,2$. Furthermore, by Lemma~\ref{lem::2.8}, we obtain $|V_i|\geq \delta+1$ for $i=1,2$. If $|V_1|=\max\{|V_1|,|V_2|,\ldots,|V_{t_2}|\}$ or $|V_2|=\max\{|V_1|,|V_{2}|,\ldots,|V_{t_2}|\}$, since $|V_{i}|\geq\delta+1$ and $|V_{j}|\geq2$ for $i=1,2$ and $3\leq j\leq t_2$, by Lemma \ref{lem::2.7},
\begin{equation*}
\begin{aligned}
\sum_{1\leq i\leq t_2}e_{G}(V_i)&\leq {\delta+1\choose 2}+(t_2-2){2\choose 2}+{n-(t+\delta+t_2-3)\choose 2}\\
&\leq{\delta+1\choose 2} +{n-(t+\delta-1)\choose 2} ~~(\mbox{since $t_2\geq 2$}).
\end{aligned}
\end{equation*}
If there exists a nontrivial part, say $V_j$, such that  $|V_j|=\max\{|V_1|,|V_2|,\ldots,|V_{t_2}|\}$ for some $3\leq j\leq t_2$. Similarly,
$$\sum_{1\leq i\leq t_2}e_{G}(V_i)\leq 2{\delta+1\choose 2} +{n-(2\delta+t-1)\choose 2}.$$
Note that $|V_1|\geq \delta+1$ and $|V_2|\geq \delta+1$. Then $2\leq t\leq n-2\delta$. Combining this with (\ref{equ::4}) and $\sum_{1\leq i\leq t_1}e_{G}(v_i)=0$, we have
\begin{equation*}
\begin{aligned}
e(G)&=\sum_{1\leq i\leq t_2}e_{G}(V_i)+\sum_{1\leq i\leq t_1}e_{G}(v_i)+e_{G}(\pi)\\
&\leq \max\left\{{\delta\!+\!1\choose 2}\!+\!{n\!-\!(t\!+\!\delta\!-\!1)\choose 2},2{\delta\!+\!1\choose 2}\!+\!{n\!-\!(2\delta\!+\!t\!-\!1)\choose 2}\right\}\!+\!k(t\!-\!1)\!-\!1\\
&\leq{\delta\!+\!1\choose 2}\!+\!{n\!-\!(t\!+\!\delta\!-\!1)\choose 2}+k(t\!-\!1)\!-\!1~~(\mbox{since $t\leq n-2\delta$})\\
&=\frac{t^2}{2}+(-n+\delta-\frac{1}{2}+k) t+\delta^2-n\delta+ \frac{n^2}{2}+\frac{n}{2}-k-1.
\end{aligned}
\end{equation*}
Let $g(t)=\frac{t^2}{2}+(-n+\delta-\frac{1}{2}+k) t+\delta^2-n\delta+ \frac{n^2}{2}+\frac{n}{2}-k-1$. We take the derivative of $g(t)$. Thus
$$g'(t)=t+\delta+k-n-\frac{1}{2}\leq k-\delta-\frac{1}{2}<0$$
by the facts that $\delta\geq 2k$, $k\geq 1$ and $t\leq n-2\delta$.
This implies that $g(t)$ is decreasing with respect to $2\leq t\leq n-2\delta$, and hence
$$e(G)\leq {\delta\!+\!1\choose2}\!+\!{n\!-\!\delta\!-\!1\choose 2}+k-1,$$
a contradiction. This completes the proof.\end{proof}

\begin{proof}[\bf Proof of Theorem \ref{thm::1.2}]
Assume to the contrary that $\tau(G)\leq k-1$. By Theorem~\ref{lem::2.9}, there exists a partition $\pi$ of $V(G)$ with $t_1$ trivial parts $v_1,v_2,\ldots, v_{t_1}$ and $t_2$ nontrivial parts $V_1,V_2,\ldots,V_{t_2}$ such that
\begin{equation}\label{equ::5}
\begin{aligned}
e_{G}(\pi)\leq k(t-1)-1,
\end{aligned}
\end{equation}
where $t=t_1+t_2$. By using the same analysis as Theorem \ref{thm::edgenumber}, we can deduce that the partition $\pi$ contains at least two nontrivial parts, say $V_1,V_2$, such
that $|V_i|\geq\delta+1$ for $i=1,2$.
First suppose that $t=2$. This implies that the partition $\pi$ consists of two nontrivial parts $V_1,V_2$.  Then $e_{G}(\pi)=e_{G}(V_1,V_2)\leq k-1$ by (\ref{equ::5}).  Clearly, $G$ is a spanning subgraph of some graph $H$ in $\mathcal{G}_{n,|V_1|}^{k-1}$. Then
\begin{equation}\label{equ::6}
\rho(G)\leq \rho(H),
\end{equation}
with equality if and only if $G\cong H$.
Note that $\min\{|V_1|,|V_2|\}\geq\delta+1$. Combining this with Lemmas~\ref{lem::2.5} and \ref{lem::2.6} as well as (\ref{equ::6}), we conclude that $$\rho(G)\leq\rho(B_{n,\delta+1}^{k-1}),$$
with equality if and only if $G\cong B_{n,\delta+1}^{k-1}$. However, this is impossible because $\rho(G)\geq\rho(B_{n,\delta+1}^{k-1})$ and $G\ncong B_{n,\delta+1}^{k-1}$.

Now suppose that $t\geq 3$. Note that $\rho(G)\geq\rho(B_{n,\delta+1}^{k-1})>\rho(K_{n-\delta-1})=n-\delta-2$. Then by Lemmas \ref{lem::2.1} and \ref{lem::2.2},
\begin{equation}\label{equ::7}
e(G)> \frac{n^2}{2}-\frac{(2\delta+3)n}{2}+(\delta+1)^{2}.
\end{equation}
Since $\min\{|V_1|,|V_2|\}\geq\delta+1$, by using a similar analysis as Theorem \ref{thm::edgenumber}, it follows that
\begin{equation*}
\begin{aligned}
e(G)&=\sum_{1\leq i\leq t_2}e_{G}(V_i)+\sum_{1\leq i\leq t_1}e_{G}(v_i)+e_{G}(\pi)\\
&\leq \max\left\{{\delta\!+\!1\choose 2}\!+\!{n\!-\!(t\!+\!\delta\!-\!1)\choose 2},2{\delta\!+\!1\choose 2}\!+\!{n\!-\!(2\delta\!+\!t\!-\!1)\choose 2}\right\}\!+\!k(t\!-\!1)\!-\!1\\
&\leq{\delta\!+\!1\choose 2}\!+\!{n\!-\!(t\!+\!\delta\!-\!1)\choose 2}+k(t\!-\!1)\!-\!1~~(\mbox{since $t\leq n-2\delta$})\\
&= \frac{n^2}{2}-\frac{(2t+2\delta-1)n}{2}+\frac{(t+2\delta+2k-1)t}{2}+\delta^2-k-1.
\end{aligned}
\end{equation*}
Combining this with (\ref{equ::7}) and $t\geq 3$, we have
\begin{equation}\label{equ::8}
n< \delta+k+\frac{t+1}{2}+\frac{k-1}{t-2}.
\end{equation}
Suppose that $f(t)=\delta+k+\frac{t+1}{2}+\frac{k-1}{t-2}$. One can verify that $f(t)$ is convex for $t>0$ and its maximum in any closed interval is attained at one of the ends of this interval. Note that $3\leq t\leq n-2\delta$. Then
\begin{equation*}
\begin{aligned}
f(t)&\leq \max\left\{\delta+2k+1, \frac{n+1}{2}+k+\frac{k-1}{n-2\delta-2}\right\}
\leq \frac{n-1}{2}+2k~(\mbox{since $n\geq 2\delta +3$}).
\end{aligned}
\end{equation*}
Combining this with (\ref{equ::8}) and $\delta\geq 2k$, we can deduce that $n<4k-1\leq 2\delta -1$, a contradiction. This completes the proof.
\end{proof}

\section{Proof of Theorem  \ref{thm::1.3}}\label{sec::econ}
The proof idea of Theorem~\ref{thm::1.3} is quite similar to that of Theorem~\ref{thm::1.2}. To present the proof, we need several lemmas below.

\begin{lem}[Theorem~3.1 of \cite{Ning}]\label{lem::3.1}
If $G_0$ is a graph with the maximum spactral radius in $\mathcal{A}_{n}^{\kappa',\delta}$, where $1\leq\kappa'<\delta$, then $G_0\in \mathcal{G}_{n,\delta+1}^{\kappa'}$.
\end{lem}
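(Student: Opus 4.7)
The plan is to peel apart the structure of the extremal graph $G_0$ in two stages: first, locate a minimum edge cut and force both sides to induce cliques, which places $G_0$ in some $\mathcal{G}_{n,a}^{\kappa'}$; second, pin down the clique sizes via (analogs of) Lemmas~\ref{lem::2.5} and \ref{lem::2.6}.

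First I would pick an edge cut $[S,\bar S]$ of $G_0$ of size $\kappa'$, which exists because $\kappa'(G_0)=\kappa'$. Since $\kappa'\leq \delta-1$, Lemma~\ref{lem::2.8} gives $\min\{|S|,|\bar S|\}\geq \delta+1$. To force $G_0[S]$ and $G_0[\bar S]$ to be complete, suppose $u,v\in S$ with $uv\notin E(G_0)$ (the case $u,v\in \bar S$ is symmetric). Adding $uv$ keeps the cut $[S,\bar S]$ of size $\kappa'$, so $\kappa'(G_0+uv)=\kappa'$, and the Perron--Frobenius theorem gives $\rho(G_0+uv)>\rho(G_0)$. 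Assuming the minimum-degree condition is preserved, this contradicts the maximality of $G_0$; iterating places $G_0\in\mathcal{G}_{n,|S|}^{\kappa'}$ for some $|S|$ with $\delta+1\leq|S|\leq n-\delta-1$.

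For the final step, the arguments behind Lemmas~\ref{lem::2.5} and \ref{lem::2.6}, with $k-1$ replaced by $\kappa'$ throughout, show that across $\bigcup_a \mathcal{G}_{n,a}^{\kappa'}$ the spectral radius is strictly smaller than $\rho(B_{n,\delta+1}^{\kappa'})$ whenever $a\geq \delta+2$, and is at most $\rho(B_{n,\delta+1}^{\kappa'})$ when $a=\delta+1$. Since $B_{n,\delta+1}^{\kappa'}\in\mathcal{A}_n^{\kappa',\delta}$, the maximality of $\rho(G_0)$ forces $|S|=\delta+1$ up to swapping with $\bar S$, and hence $G_0\in\mathcal{G}_{n,\delta+1}^{\kappa'}$, as required.

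The main obstacle is the minimum-degree bookkeeping in the clique-completion step: since $\mathcal{A}_n^{\kappa',\delta}$ fixes the minimum degree at $\delta$, the edge-addition $G_0\mapsto G_0+uv$ can raise the minimum degree whenever every degree-$\delta$ vertex of $G_0$ lies in $\{u,v\}$. To circumvent this I would choose the non-edge $uv$ so that some degree-$\delta$ vertex lies outside $\{u,v\}$, using the lower bound $|S|,|\bar S|\geq\delta+1$ together with a counting argument to guarantee enough flexibility; in the residual degenerate cases I would instead appeal to Lemma~\ref{lem::2.4}, which rearranges edges via the Perron vector rather than adding them, thereby preserving the degree sequence while strictly increasing $\rho$.
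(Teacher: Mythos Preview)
The paper does not prove this lemma; it is quoted as Theorem~3.1 of Ning, Lu and Wang~\cite{Ning} and used as a black box in the one-line proof of Theorem~\ref{thm::1.3}. There is therefore no in-paper argument to compare your proposal against.

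On the merits of your sketch: the second stage is unnecessary and would smuggle in extra hypotheses. Once you have $G_0\in\mathcal{G}_{n,a}^{\kappa'}$ for some $a$ with $\delta+1\le a\le n-\delta-1$, the requirement $\delta(G_0)=\delta$ (equality, not merely $\ge\delta$) already forces $\min\{a,n-a\}=\delta+1$: since $\kappa'<\delta+1\le\min\{a,n-a\}$, each clique contains a vertex with no cross-edge, of degree $a-1$ or $n-a-1$, whence $\delta(G_0)=\min\{a-1,n-a-1\}$. Invoking analogues of Lemmas~\ref{lem::2.5} and~\ref{lem::2.6} is not needed here, and doing so would import a lower bound on $n$ (cf.\ Lemma~\ref{lem::3.2}) that Lemma~\ref{lem::3.1} does not assume.

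The real content is your first stage, and the obstacle you flag is genuine: adding a missing edge inside $S$ can raise the minimum degree above $\delta$ and eject the graph from $\mathcal{A}_n^{\kappa',\delta}$. Your proposed fixes---choosing the non-edge away from all degree-$\delta$ vertices, or using Lemma~\ref{lem::2.4} to shift edges rather than add them---are plausible but remain sketches; in particular Lemma~\ref{lem::2.4} does \emph{not} preserve the degree sequence, so that fallback needs its own minimum-degree check. A cleaner line is to note that once $\min\{|S|,|\bar S|\}=\delta+1$, the smaller side always contains a vertex with no cross-edge (as $\kappa'<\delta+1$) and hence of degree exactly $\delta$, so edge additions anywhere else are safe; the case $\min\{|S|,|\bar S|\}\ge\delta+2$ can then be eliminated separately by comparing with $B_{n,\delta+1}^{\kappa'}\in\mathcal{A}_n^{\kappa',\delta}$.
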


Recall that $\mathcal{G}_{n,n_1}^{i}$ is the set of graphs obtained from $K_{n_1}\cup K_{n-n_{1}}$ by adding $i$ edges between $K_{n_1}$ and $K_{n-n_{1}}$, and $B_{n,\delta+1}^{i}$ is the graph obtained from $K_{\delta+1}\cup K_{n-\delta-1}$ by adding $i$ edges joining a vertex in $K_{\delta+1}$ and $i$ vertices in $K_{n-\delta-1}$.
We can extend the results of Lemmas \ref{lem::2.3} and \ref{lem::2.5}.
\begin{lem}\label{lem::3.2}
Let $G\in\mathcal{G}_{n,\delta+1}^{\kappa'}$, where $n\geq 2\delta+4$ and $4\leq \kappa'< \delta$. Then
$$n-\delta-2< \rho(G)< n-\delta.$$
\end{lem}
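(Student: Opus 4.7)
I would follow the same blueprint as Lemma~\ref{lem::2.3}, making only parameter substitutions to accommodate the larger admissible value $\kappa'\le\delta-1$ (versus $\kappa'=k-1\le\delta/2-1$ in Lemma~\ref{lem::2.3}) and the correspondingly weaker target $n-\delta$ in place of $n-\delta-1$.

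For the lower bound, $G$ properly contains $K_{\delta+1}\cup K_{n-\delta-1}$ because $\kappa'\ge 4>0$; and $n\ge 2\delta+4$ forces $n-\delta-1>\delta+1$, so $\rho(K_{\delta+1}\cup K_{n-\delta-1})=n-\delta-2$, whence $\rho(G)>n-\delta-2$.

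For the upper bound, I would first observe that the minimum degree of $G$ equals $\delta$: at most $\kappa'\le\delta-1$ of the $\delta+1$ vertices of $K_{\delta+1}$ can receive cross edges, so at least two vertices of $K_{\delta+1}$ retain degree $\delta$; meanwhile every vertex of $K_{n-\delta-1}$ has degree at least $n-\delta-2>\delta$. Using $\kappa'\le\delta-1$,
$$e(G)={\delta+1\choose2}+{n-\delta-1\choose2}+\kappa'\le{\delta+1\choose2}+{n-\delta-1\choose2}+\delta-1,$$
and an application of Lemmas~\ref{lem::2.1} and \ref{lem::2.2}, followed by completing the square in the discriminant in a form parallel to that of Lemma~\ref{lem::2.3}, gives
\begin{equation*}
\rho(G)\le\frac{\delta-1}{2}+\sqrt{\Bigl(n-\tfrac{3\delta-1}{2}\Bigr)^{2}-4(n-2\delta)}<\frac{\delta-1}{2}+\Bigl(n-\tfrac{3\delta-1}{2}\Bigr)=n-\delta,
\end{equation*}
the strict inequality using $n-2\delta\ge 4>0$ from the hypothesis $n\ge 2\delta+4$.

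The only nontrivial step is the algebraic rearrangement: one must write the term under the radical as $(n-(3\delta-1)/2)^{2}-4(n-2\delta)$, i.e., a completed square minus a strictly positive residual, rather than as a completed square plus a positive residual; this is what allows the square-root bound to telescope cleanly to $n-\delta$. With that arrangement chosen, the required slack $n>2\delta$ is furnished directly by $n\ge 2\delta+4$, and no further structural input is needed. The whole argument is thus a parameter substitution of the proof of Lemma~\ref{lem::2.3}, with the slightly enlarged hypothesis on $n$ compensating for the larger edge allowance $\kappa'\le\delta-1$.
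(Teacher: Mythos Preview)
Your proposal is correct and is essentially identical to the paper's own proof: both obtain the lower bound from the proper spanning subgraph $K_{\delta+1}\cup K_{n-\delta-1}$, bound $e(G)$ via $\kappa'\le\delta-1$, and apply Lemmas~\ref{lem::2.1}--\ref{lem::2.2} with the same completed-square identity (your $n-\tfrac{3\delta-1}{2}$ is the paper's $n-\tfrac{3\delta}{2}+\tfrac{1}{2}$). Your explicit verification that the minimum degree of $G$ equals $\delta$ is a small but worthwhile addition that the paper leaves implicit.
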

\begin{proof}
Note that $G$ contains $K_{\delta+1}\cup K_{n-\delta-1}$ as a proper spanning subgraph and $n\geq 2\delta+4$. Then  $\rho(G)>\rho(K_{\delta+1}\cup K_{n-\delta-1})= n-\delta-2$.
Since $G\in\mathcal{G}_{n,\delta+1}^{\kappa'}$ and $\delta> \kappa'$, we have
\begin{equation*}
\begin{aligned}
   e(G)&={\delta+1\choose 2}+{n-\delta-1\choose 2}+\kappa'\\
   &\leq {\delta+1\choose 2}+{n-\delta-1\choose 2}+\delta-1\\ &=\frac{n^2}{2}-\frac{(2\delta+3)n}{2}+\delta^2+3\delta.
\end{aligned}
\end{equation*}
Combining this with Lemmas \ref{lem::2.1} and \ref{lem::2.2}, we have
\begin{equation*}
\begin{aligned}
   \rho(G)&\leq \frac{\delta-1}{2}+\sqrt{n^2 + (-3\delta - 3)n + \frac{9\delta^2}{4}+ \frac{13\delta}{2}+\frac{1}{4}}\\
   &= \frac{\delta-1}{2}+\sqrt{\left(n-\frac{3\delta}{2}+\frac{1}{2}\right)^2-4(n-2\delta )}\\
   &< \frac{\delta-1}{2}+ \left(n-\frac{3\delta}{2}+\frac{1}{2}\right)~~(\mbox{since $n\geq 2\delta\!+\!4$})\\
   &=n-\delta.
\end{aligned}
\end{equation*}
Thus $n-\delta-2< \rho(G)< n-\delta$, as required.\end{proof}

By Lemma \ref{lem::3.2} and using the same analysis as the proof of Lemma \ref{lem::2.5}, we easily obtain the following result.
\begin{lem}\label{lem::3.3}
Let $G\in \mathcal{G}_{n,\delta+1}^{\kappa'}$ where $4\leq \kappa'< \delta$ and $n\geq 2\delta+4$. Then $\rho(G)\leq \rho(B_{n,\delta+1}^{\kappa'})$, with equality if and only if $G\cong B_{n,\delta+1}^{\kappa'}$.
\end{lem}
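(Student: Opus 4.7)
The plan is to mimic the proof of Lemma~\ref{lem::2.5} with $k-1$ replaced systematically by $\kappa'$, and with Lemma~\ref{lem::3.2} in place of Lemma~\ref{lem::2.3}. Let $G'$ be a graph attaining the maximum spectral radius in $\mathcal{G}_{n,\delta+1}^{\kappa'}$, so that $\rho(G)\le\rho(G')$ for every $G$ in this family. Partition $V(G')=V_1\cup V_2$ with $V_1=V(K_{\delta+1})=\{u_1,\dots,u_{\delta+1}\}$ and $V_2=V(K_{n-\delta-1})=\{v_1,\dots,v_{n-\delta-1}\}$, ordered so that the Perron entries $x_{u_i}$ and $x_{v_j}$ are nonincreasing. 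Invoking the Kelmans-type switch in Lemma~\ref{lem::2.4} against the maximality of $\rho':=\rho(G')$ forces the nested neighborhood conditions $N_{V_2}(u_{i+1})\subseteq N_{V_2}(u_i)$ and $N_{V_1}(v_{j+1})\subseteq N_{V_1}(v_j)$.

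Next I would set $r=d_{V_2}(u_1)$, $t=d_{V_2}(u_2)$, and $s=d_{V_1}(v_1)$. If $r=\kappa'$ or $s=1$, the nesting forces $G'\cong B_{n,\delta+1}^{\kappa'}$ and the lemma follows. Otherwise $r\le\kappa'-1$ and $s\ge 2$, and I would derive from $A(G')x=\rho'x$ the eigen-equation bound
\[
x_{v_{r+1}}\ \ge\ \frac{x_{v_1}}{\rho'-(n-\delta-3)},
\]
which is valid because Lemma~\ref{lem::3.2} gives $\rho'>n-\delta-2$. Put $E_1=\{u_1v_i:r+1\le i\le\kappa'\}$ and $E_2=\{u_iv_j:2\le i\le s,\ 1\le j\le t\}$, and let $G''=G'-E_2+E_1$. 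Then $G''\cong B_{n,\delta+1}^{\kappa'}$, and the eigen-equation $A(G'')y=\rho''y$ with $\rho'':=\rho(G'')$ yields
\[
y_{u_2}\ =\ \frac{y_{u_1}}{\rho''-\delta+1},
\]
where the denominator is positive because $n\ge 2\delta+4$ forces $\rho''>n-\delta-2\ge\delta+2>\delta-1$.

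The key step is then the Rayleigh-type comparison
\[
y^T(\rho''-\rho')x\ =\ y^T\bigl(A(G'')-A(G')\bigr)x.
\]
Expanding the right-hand side over $E_1$ and $E_2$, and applying $x_{u_1}\ge x_{u_i}$, $x_{v_1}\ge x_{v_j}$ together with the two displayed estimates above, the sum telescopes to a positive multiple of
\[
\rho''-\rho'+n-2\delta-2.
\]
By Lemma~\ref{lem::3.2}, both $\rho'$ and $\rho''$ lie in the open interval $(n-\delta-2,\,n-\delta)$, so $\rho''-\rho'>-2$; combined with $n-2\delta-2\ge 2$ from $n\ge 2\delta+4$, the expression is strictly positive. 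Hence $\rho''>\rho'$, contradicting the maximality of $G'$, and the lemma is established.

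The only real obstacle is bookkeeping. In Lemma~\ref{lem::2.5} the interval trapping $\rho'$ had length $1$, and the hypothesis $n\ge 2\delta+3$ barely rescued the final sign of $\rho''-\rho'+n-2\delta-2$. Here the interval provided by Lemma~\ref{lem::3.2} has length $2$, so one needs to widen the hypothesis to $n\ge 2\delta+4$ to compensate; once this is done, every inequality in the Lemma~\ref{lem::2.5} argument transfers verbatim.
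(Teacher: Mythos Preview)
Your proposal is correct and is exactly what the paper intends: it explicitly states that Lemma~\ref{lem::3.3} follows ``by Lemma~\ref{lem::3.2} and using the same analysis as the proof of Lemma~\ref{lem::2.5}'', and your write-up carries out precisely that transfer, including the necessary adjustment that the length-$2$ interval from Lemma~\ref{lem::3.2} is compensated by the hypothesis $n\ge 2\delta+4$ in the final sign check of $\rho''-\rho'+n-2\delta-2$. The only cosmetic slip is that $E_2$ should be $\{u_iv_j\in E(G'):2\le i\le s,\ 1\le j\le t\}$ (the edges actually present in $G'$), so that $|E_1|=|E_2|=\kappa'-r$ as your telescoping implicitly uses.
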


\begin{proof}[\bf Proof of Theorem \ref{thm::1.3}]
The result follows from Lemmas~\ref{lem::3.1} and \ref{lem::3.3}.
\end{proof}

\section{Some applications}\label{sec::app}

Edge-disjoint spanning trees are closely related to many graph properties, such as collapsibility, supereulerianity, spanning connectivity, nowhere-zero flows, group connectivity, rigidity, and others \cite{CDGG22,LL19,Palmer01}. We introduce several applications of our main result in spectral graph theory.

Spectral conditions of classic rigidity have been studied in \cite{CDGG22,CDG21,FHL22}. We present spectral conditions for two other variations of rigidity in the next two subsections. To conclude, we also present applications in nowhere-zero flows at the end of this section.

\subsection{Body-and-bar rigidity}
A \textit{body-and-bar frameworks} in $\mathbb{R}^d$ is a framework of $d$-dimension rigid bodies that are connected by fixed-length bars attached at points of their surfaces (see \cite{Tay84} for more details). Informally, we say a graph $G$ is \textit{body-bar rigid in $\mathbb{R}^d$} if there exists a generic rigid body-bar framework in $\mathbb{R}^d$. 
Instead of a formal definition, we present the following characterization.

\begin{thm}[Tay~\cite{Tay84}]
A graph $G$ is body-bar rigid in $\mathbb{R}^d$ if and only if it contains $\frac{d(d+1)}{2}$ edge-disjoint spanning trees.
\end{thm}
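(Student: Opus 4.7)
The plan is to set up the body-bar infinitesimal rigidity matrix explicitly, identify its generic row-independence structure with the matroid union of $\frac{d(d+1)}{2}$ copies of the graphic matroid $M(G)$, and then invoke the Nash-Williams/Tutte theorem (Theorem~\ref{lem::2.9}) with $k=\frac{d(d+1)}{2}$.

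First I would fix a generic placement of the bodies $B_1,\ldots,B_n$ in $\mathbb{R}^d$ together with generic attachment points for every bar. An infinitesimal motion assigns to each $B_i$ a screw, i.e.\ an element of the Lie algebra $se(d)\cong \mathbb{R}^{d(d+1)/2}$, and differentiating the length constraint on a bar $e=ij$ produces one linear equation coupling the screws at $B_i$ and $B_j$. Assembling these rows yields a rigidity matrix $R(G)$ with $|E(G)|$ rows and $n\cdot\frac{d(d+1)}{2}$ columns. Because the global rigid motions of the whole framework span a $\frac{d(d+1)}{2}$-dimensional subspace of $\ker R(G)$, the graph $G$ is body-bar rigid in $\mathbb{R}^d$ exactly when $\operatorname{rank} R(G) = (n-1)\frac{d(d+1)}{2}$.

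Next I would rewrite each row of $R(G)$ in Pl\"ucker coordinates: the row of a bar $e=ij$ becomes the Pl\"ucker vector of the line carrying the bar, inserted into coordinate block $i$ with one sign and into block $j$ with the opposite sign, and zero elsewhere. This naturally partitions the columns into $\frac{d(d+1)}{2}$ coordinate \emph{types}. The combinatorial heart of Tay's theorem is the claim that for generic attachment points, a set $F\subseteq E(G)$ indexes linearly independent rows of $R(G)$ if and only if $F$ admits a partition into $\frac{d(d+1)}{2}$ forests. The easy direction is a dimension count, since each coordinate ``copy'' of the graphic matroid has rank $n-1$, forcing the union to have rank at most $(n-1)\frac{d(d+1)}{2}$; for the hard direction one expands a hypothetical dependence in Pl\"ucker coordinates and exploits genericity to force each coordinate type to witness a cycle, contradicting the assumed forest partition. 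Granting the matroid identity $\mathcal{M}_{\mathrm{bb}}(G)=\bigvee_{k=1}^{d(d+1)/2} M(G)$, Theorem~\ref{lem::2.9} then yields $\operatorname{rank}\mathcal{M}_{\mathrm{bb}}(G)=(n-1)\frac{d(d+1)}{2}$ precisely when $G$ contains $\frac{d(d+1)}{2}$ edge-disjoint spanning trees, and the rank-rigidity correspondence from the first step closes the argument.

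The main obstacle is unquestionably the matroid identification itself. Non-generic placements can manufacture linear dependencies that mix coordinate types, so a careful genericity argument is needed to rule these out. I would handle this by induction on $|E(G)|$: add one bar at a time, and use a lifting argument on the Pl\"ucker line of the new bar to ensure generic row-independence is preserved whenever the enlarged edge set still admits a valid forest decomposition. Once this inductive step is in place, everything else is essentially translation between the geometric infinitesimal rigidity framework and the combinatorial Nash-Williams/Tutte condition.
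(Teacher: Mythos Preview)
The paper does not prove this statement. It is quoted as a known result from Tay~\cite{Tay84} and used only as a black box, together with Theorem~\ref{thm::1.2}, to derive the short corollary that follows. There is therefore no ``paper's own proof'' to compare your proposal against.

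That said, your outline is broadly in the spirit of the original argument: one identifies the generic body-bar rigidity matroid with the matroid union of $\binom{d+1}{2}$ copies of $M(G)$ and then reads off the spanning-tree criterion via Nash-Williams/Tutte. Be aware, though, that the genericity step you flag as the ``main obstacle'' really is where all the content lies, and your inductive Pl\"ucker-lifting sketch is not yet a proof; one typically needs either Tay's original recursive construction or the White--Whiteley tie-down technique to pin this down rigorously. For the purposes of the present paper, however, none of this is needed: the theorem is simply cited.
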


By the above theorem and Theorem~\ref{thm::1.2}, we have the following spectral condition for body-bar rigidity.
\begin{pro}
Let $k=\frac{d(d+1)}{2}$, and let $G$ be a connected graph with minimum degree $\delta\geq 2k$ and order $n\geq 2\delta+3$. If $\rho(G)\geq \rho(B_{n,\delta+1}^{k-1})$, then $G$ is body-bar rigid in $\mathbb{R}^d$ unless $G\cong B_{n,\delta+1}^{k-1}$.
\end{pro}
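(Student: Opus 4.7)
The statement is an immediate corollary of the two results quoted just above it, so the plan is simply to chain them together. First I would observe that the hypotheses of the proposition are exactly those of Theorem~\ref{thm::1.2} with the specific value $k=\tfrac{d(d+1)}{2}$: namely, $G$ is connected, $\delta(G)\geq 2k$, $n\geq 2\delta+3$, and $\rho(G)\geq \rho(B_{n,\delta+1}^{k-1})$. Theorem~\ref{thm::1.2} therefore applies.

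Applying Theorem~\ref{thm::1.2}, we conclude that either $G\cong B_{n,\delta+1}^{k-1}$, which is the excluded case in the conclusion, or $\tau(G)\geq k=\tfrac{d(d+1)}{2}$. In the second case, $G$ contains $\tfrac{d(d+1)}{2}$ edge-disjoint spanning trees, so Tay's theorem (stated immediately before the proposition) yields that $G$ is body-bar rigid in $\mathbb{R}^{d}$. This finishes the argument.

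There is no genuine obstacle here; the proposition is designed as a straightforward application, and the only thing to be careful about is that the exceptional graph $B_{n,\delta+1}^{k-1}$ in Theorem~\ref{thm::1.2} matches the exceptional graph in the proposition, which it does by construction. No extra case analysis, no additional lemmas, and no numerical verification are needed beyond matching the parameter $k=\tfrac{d(d+1)}{2}$ in the two quoted results.
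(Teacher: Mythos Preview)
Your proposal is correct and mirrors exactly what the paper does: it states the proposition as an immediate consequence of Tay's theorem together with Theorem~\ref{thm::1.2}, and your write-up simply spells out that two-step chaining. There is nothing to add; the paper itself gives no further argument beyond the sentence ``By the above theorem and Theorem~\ref{thm::1.2}, we have the following spectral condition for body-bar rigidity.''
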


\subsection{Rigidity on surfaces of revolution}
Here we assume that the joints of our framework are restricted to lie on a smooth surface $\mathcal{M} \subset \mathbb{R}^3$, and $\mathcal{M}$ is an \textit{irreducible surface}; i.e.,~$\mathcal{M}$ is the zero set of an irreducible rational polynomial $h(x,y,z) \in \mathbb{Q}[X,Y,Z]$. The framework $(G,p)$ with $p(v) \in \mathcal{M}$ for every $v \in V(G)$ is \textit{rigid on $\mathcal{M}$} if there exists  $\varepsilon >0$ such that if $(G,p)$ is equivalent to $(G,q)$ and $\|p(v)-q(v)\|<\epsilon$ and $q(v) \in \mathcal{M}$ for every $v \in V(G)$, then $(G,p)$ is congruent to $(G,q)$.
An irreducible surface is called an \textit{irreducible surface of revolution} if it can be generated by rotating a continuous curve about a fixed axis. See \cite{NixonOwenPower12} for more information.

\begin{thm}[Nixon, Owen and Power \cite{NixonOwenPower12,NixonOwenPower14}]
\label{thm::nop}
Let $\mathcal{M}$ be an irreducible surface of revolution.
Then a graph $G$ is rigid on $\mathcal{M}$ if and only if one of the following:
\\$(i)$ $G$ is a complete graph,
\\$(ii)$ $\mathcal{M}$ is a sphere and $G$ contains a spanning Laman graph,
\\$(iii)$ $\mathcal{M}$ is a cylinder and $G$ contains two edge-disjoint spanning trees, or
\\$(iv)$ $\mathcal{M}$ is not a cylinder or a sphere and $G$ contains two edge-disjoint spanning subgraphs $G_1,G_2$, where $G_1$ is a tree and every connected component of $G_2$ contains exactly one cycle.
\end{thm}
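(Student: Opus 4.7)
The plan is to adapt the Laman--Lovász approach to combinatorial rigidity, working with the rigidity matrix associated with the ambient surface $\mathcal{M}$. First I would set up, for each framework $(G,p)$ with $p(v)\in\mathcal{M}$, the surface rigidity matrix $R_{\mathcal M}(G,p)$, whose rows are indexed by edges and columns by pairs (vertex, tangent coordinate of $\mathcal M$ at $p(v)$). Its kernel is the space of infinitesimal motions, and generic rigidity is equivalent to $\operatorname{rank} R_{\mathcal M}(G,p) = 2n - \dim\mathrm{Isom}(\mathcal M)$ for generic $p$. The key geometric input is counting the isometry group of an irreducible surface of revolution: a sphere has a $3$-dimensional rotation group, a cylinder has a $2$-dimensional group (rotation about the axis together with translation along it), and any other irreducible surface of revolution has only the $1$-parameter rotation group about its axis. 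Part $(i)$ is immediate since a complete graph trivially realizes the maximum possible rank.

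For parts $(ii)$--$(iv)$, the approach is to prove that generic rigidity corresponds to a $(2,k)$-tight sparsity condition, where $k=\dim\mathrm{Isom}(\mathcal M)$. The necessity direction is a dimension count: if $(G,p)$ is rigid then $|E(G)|\ge 2|V(G)|-k$, and applying the same argument to every induced subframework yields $|E(H)|\le 2|V(H)|-k$ on every subgraph of any minimally rigid graph. The sufficiency direction is the hard part, and I expect this to be the main obstacle. The standard method is an inductive argument using Henneberg-type operations (vertex addition, edge split, and $\mathcal M$-specific variants) that preserve generic rigidity; one shows every $(2,k)$-tight graph can be reduced via the inverse of such moves, and that each move preserves maximal rank of $R_{\mathcal M}$ at generic $p$. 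The preservation step requires checking that a specific choice of coordinates on $\mathcal M$ realizes the Henneberg move with nonzero Jacobian, which is where irreducibility of $\mathcal M$ (as the zero set of an irreducible polynomial in $\mathbb{Q}[X,Y,Z]$) enters decisively, preventing the generic configuration from landing on a degenerate subvariety.

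Given the combinatorial characterization, the three cases then identify the relevant $(2,k)$-tight families with the descriptions in the statement. For $(iii)$ we have $k=2$, and by the Nash-Williams--Tutte theorem (Theorem~\ref{lem::2.9} with $k=2$) the $(2,2)$-tight graphs are exactly those admitting two edge-disjoint spanning trees. For $(iv)$ we have $k=1$, so $(2,1)$-tight graphs have $2n-1$ edges with every subgraph satisfying $|E(H)|\le 2|V(H)|-1$; a matroid-union argument (or an application of Nash-Williams' arboricity formula combined with Edmonds' matroid partition theorem) shows these are precisely the edge-disjoint unions of a spanning tree and a spanning pseudoforest in which every component has exactly one cycle. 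For $(ii)$, $k=3$ and a stereographic projection argument, respecting generic rigidity, identifies generic rigidity on the sphere with generic planar Euclidean rigidity, so Laman's theorem gives the spanning-Laman characterization.

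The central obstacle, as noted, is the inductive sufficiency step, because the Henneberg moves on a general surface of revolution do not immediately factor through Euclidean rigidity; one must produce explicit tangent-vector calculations on $\mathcal M$ and argue that the resulting rigidity matrix cannot drop rank on the full algebraic variety of configurations, using irreducibility of $\mathcal M$ to rule out the vanishing locus. Once the inductive reductions are set up carefully and base cases (small $(2,k)$-tight graphs) are verified by direct computation of $R_{\mathcal M}$, the theorem follows by combining the induction with the combinatorial identifications above.
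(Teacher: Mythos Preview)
The paper does not prove this theorem at all: Theorem~\ref{thm::nop} is quoted from the cited works of Nixon, Owen and Power and is used only as a black box to derive the subsequent proposition on rigidity on non-spherical surfaces of revolution. There is therefore no ``paper's own proof'' to compare your proposal against.

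That said, your sketch is a reasonable outline of the strategy in the original Nixon--Owen--Power papers: set up the surface rigidity matrix, compute $\dim\mathrm{Isom}(\mathcal M)\in\{3,2,1\}$ according to whether $\mathcal M$ is a sphere, cylinder, or other irreducible surface of revolution, characterize generic rigidity via $(2,k)$-tightness, and then translate the sparsity conditions into the stated combinatorial descriptions. Two cautions are worth noting. First, your claim that stereographic projection ``respects generic rigidity'' and thereby reduces the spherical case to Laman's planar theorem is too quick; the transfer between generic rigidity on the sphere and in the plane is not a simple consequence of stereographic projection and requires its own argument (indeed, this equivalence is part of what Nixon--Owen--Power establish rather than an ingredient they import). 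Second, the statement characterizes rigidity, not minimal rigidity, so the conclusion should be that $G$ contains a spanning $(2,k)$-tight subgraph, not that $G$ itself is $(2,k)$-tight; your identification of $(2,2)$-tight graphs with graphs admitting two edge-disjoint spanning trees via Theorem~\ref{lem::2.9} is correct for the minimal case, but the rigidity statement in part~$(iii)$ concerns containment of such a subgraph. These are refinements rather than fatal gaps, and the overall architecture you describe is the right one.
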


By the above theorem, the results in \cite{FHL22} actually imply spectral conditions for the rigidity on sphere. By Theorem~\ref{thm::1.2}, we have the following spectral condition for rigidity on irreducible surfaces of revolution that is not a sphere.
\begin{pro}
Let $G$ be a connected graph with minimum degree $\delta\geq 4$ and order $n\geq 2\delta+3$. If $\rho(G)\geq \rho(B_{n,\delta+1}^{1})$, then $G$ is rigid on any irreducible surface of revolution that is not a sphere unless $G\cong B_{n,\delta+1}^{1}$.
\end{pro}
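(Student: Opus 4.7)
The plan is to reduce the statement to Theorem~\ref{thm::1.2} applied with $k=2$, combined with the Nixon--Owen--Power structural characterization stated as Theorem~\ref{thm::nop}. The hypotheses $\delta\geq 4=2k$, $n\geq 2\delta+3$, and $\rho(G)\geq \rho(B_{n,\delta+1}^{1})=\rho(B_{n,\delta+1}^{k-1})$ are precisely the $k=2$ instance of Theorem~\ref{thm::1.2}, which yields $\tau(G)\geq 2$ unless $G\cong B_{n,\delta+1}^{1}$. Hence we may exclude the extremal graph and fix two edge-disjoint spanning trees $T_1,T_2\subseteq G$.

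Next, fix any irreducible surface of revolution $\mathcal{M}$ that is not a sphere, and split into two cases according to Theorem~\ref{thm::nop}. If $\mathcal{M}$ is a cylinder, condition (iii) is exactly the statement $\tau(G)\geq 2$, so rigidity of $G$ on $\mathcal{M}$ is immediate. If $\mathcal{M}$ is neither a cylinder nor a sphere, I would verify condition (iv) by augmenting one of the spanning trees with a single extra edge: the minimum-degree bound gives $e(G)\geq n\delta/2\geq 2n>2(n-1)=|E(T_1)|+|E(T_2)|$, so some edge $e\in E(G)\setminus (E(T_1)\cup E(T_2))$ must exist. Setting $G_1:=T_1$ and $G_2:=T_2+e$, one has that $G_1$ is a spanning tree while $G_2$ is a connected spanning subgraph with exactly $n$ edges, whose single connected component therefore contains exactly one cycle. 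Hence condition (iv) holds, and $G$ is rigid on $\mathcal{M}$. Since $\mathcal{M}$ was an arbitrary non-sphere irreducible surface of revolution, the proposition follows.

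Strictly speaking there is no serious obstacle here; the proof is a clean corollary of Theorem~\ref{thm::1.2} and the Nixon--Owen--Power classification. The only mildly delicate point is producing the ``tree plus spanning unicyclic subgraph'' decomposition required in case (iv) of Theorem~\ref{thm::nop} from the bare conclusion $\tau(G)\geq 2$, which is handled by the observation above that the hypotheses force at least one edge beyond any pair of edge-disjoint spanning trees.
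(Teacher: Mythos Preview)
Your proof is correct and follows essentially the same approach as the paper: apply Theorem~\ref{thm::1.2} with $k=2$ to obtain two edge-disjoint spanning trees, use the edge count $e(G)\geq n\delta/2\geq 2n>2(n-1)$ to find a surplus edge, and augment one tree to a spanning unicyclic subgraph so as to invoke Theorem~\ref{thm::nop}. The only difference is that you make the cylinder/non-cylinder case split explicit, whereas the paper leaves it implicit.
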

\begin{proof}
By Theorem~\ref{thm::1.2}, $G$ contains two edge-disjoint spanning trees. Since $\delta\ge 4$, we have $|E(G)|\ge n\delta/2\ge 2n$. Thus $G$ has extra edges that are not in the two edge-disjoint spanning trees. It follows that $G$ contains a spanning tree and a spanning subgraph with exactly one cycle that are edge-disjoint. By Theorem~\ref{thm::nop}, the result follows.
\end{proof}

\subsection{Nowhere-zero flows}
The theory of integer flows was initiated by Tutte. For an orientation $D$ of a graph $G$ and for each vertex $v$, let $E_D^+(v)$ and $E_D^-(v)$ be the set of edges oriented away from $v$ and the set of edges oriented into $v$, respectively. A \textit{nowhere-zero $k$-flow} of a graph $G$ is an orientation $D$ together with a function $f: E(G)\rightarrow \{\pm 1, \pm 2,\ldots, \pm(k-1)\}$ such that
$\sum_{e\in E_D^+(v)} f(e) = \sum_{e\in E_D^-(v)} f(e)$ for each vertex $v$. As a generalization of nowhere-zero $k$-flow, a \textit{nowhere-zero circular $k/d$-flow}, introduced by Goddyn, Tarsi and Zhang~\cite{GTZ98}, is a nowhere-zero $k$-flow such that the range of $f$ is contained in $\{\pm d, \pm (d+1),\ldots, \pm(k-d)\}$. The \textit{flow index $\phi(G)$} of a graph $G$ is the least rational number $r$ such that $G$ admits a nowhere-zero circular $r$-flow. The central problems in this research area are the three well-known flow conjectures proposed by Tutte.

For the application of spanning tree packing number in nowhere-zero flow theory, we refer readers to~\cite{LL19}. In particular, it is proved in~\cite{LXZ07} that if $\tau(G)\ge 3$, then $\phi(G)<4$; and in~\cite{HLL18} that if $\tau(G)\ge 4$, then $\phi(G)\le 3$. Thus, by Theorem~\ref{thm::1.2}, we can easily obtain sufficient conditions of flow index via spectral radius, however, these spectral conditions might not be best possible. It is worth studying flow index directly from spectral perspectives in the future.

\section{Concluding remarks}\label{sec::remarks}

Theorem~\ref{thm::1.2} actually implies that $B_{n,\delta+1}^{\tau}$ is the unique graph that has the maximum spectral radius among all graphs of fixed order $n$ with minimum degree $\delta$ and spanning tree packing number $\tau$. 
Let $G$ be a minimum graph with $\tau(G)\ge k$ and of order $n$, that is, $G$ consists of exactly $k$ edge-disjoint spanning trees with no extra edges. This implies that $\tau(G)=k$ and $e(G)=k(n-1)$. We are interested in the maximum possible spectral radius of $G$.

\begin{prob}\label{prob::minkst}
Let $G$ be a minimum graph with $\tau(G)\ge k$ and of order $n$. For each $n\ge 4$ and each $k\ge 2$, determine the maximum possible spectral radius of $G$ and characterize extremal graphs.
\end{prob}

To attack Problem~\ref{prob::minkst}, notice that $\delta(G)\ge k$ and $e(G)=k(n-1)$, we can easily obtain an upper bound on $\rho(G)$ by Lemma~\ref{lem::2.1}. However, this upper bound may not be tight, since for most values of $n$, $G$ cannot be $k$-regular or a bidegreed graph in which each vertex is of degree either $k$ or $n-1$. To attain the maximum spectral radius, it seems that $G$ is obtained from a $K_{2k}$ by continuously adding a vertex and $k$ incident edges step by step until $G$ has $n$ vertices, and in particular, we guess the graph is $K_{k}\nabla (K_{k}\cup (n-2k)K_1)$, where $\nabla$ and $\cup$ denote the join and the union of two graphs, respectively. We leave this as an open question.

Nash-Williams~\cite{Nash64} ever studied the forest covering problem, seeking the minimum number of forests that cover the entire graph. This is like a dual problem of spanning tree packing. The {\it arboricity} $a(G)$ is the minimum number of edge-disjoint forests whose union equals $E(G)$.
\begin{thm}[Nash-Williams~\cite{Nash64}]\label{thm::5.1}
Let $G$ be a connected graph. Then $a(G)\le k$ if and only if for any subgraph $H$ of $G$, $|E(H)|\le k(|V(H)|-1)$.
\end{thm}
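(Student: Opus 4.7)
The plan is to deduce the theorem from Edmonds' matroid partition theorem applied to the graphic matroid $M(G)$, whose independent sets are precisely the edge sets of forests of $G$. Under this identification, $a(G)$ is exactly the minimum number of parts in a partition of $E(G)$ into independent sets of $M(G)$, so the arboricity question becomes a matroid partition question.

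The necessity is routine and I would dispose of it first. If $a(G)\le k$, pick a partition $E(G)=F_1\cup\cdots\cup F_k$ with each $F_i$ a forest. For every subgraph $H$, the restriction $F_i\cap E(H)$ is a forest on the vertex set $V(H)$ and therefore has at most $|V(H)|-1$ edges, so
$$|E(H)|=\sum_{i=1}^{k}|F_i\cap E(H)|\le k(|V(H)|-1).$$

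For sufficiency I would invoke the matroid partition theorem: the ground set of a matroid $M$ can be covered by $k$ independent sets if and only if $|F|\le k\cdot r_M(F)$ for every $F\subseteq E(M)$. For the graphic matroid one has $r_{M(G)}(F)=|V(F)|-c(F)$, where $V(F)$ is the set of endpoints of edges of $F$ and $c(F)$ is the number of connected components of the graph $(V(F),F)$. Thus the task reduces to verifying that the density hypothesis implies $|F|\le k(|V(F)|-c(F))$ for every $F\subseteq E(G)$. This is immediate after splitting $F$ along the components $H_1,\ldots,H_{c(F)}$ of $(V(F),F)$: applying the hypothesis subgraph-by-subgraph and summing gives
$$|F|=\sum_{i=1}^{c(F)}|E(H_i)|\le k\sum_{i=1}^{c(F)}(|V(H_i)|-1)=k(|V(F)|-c(F)),$$
which is exactly the matroid partition criterion.

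The principal obstacle on this route is the black-box appeal to the matroid partition theorem. If that tool were unavailable, I would fall back on a direct exchange argument: assume for contradiction that $E(G)$ cannot be covered by $k$ forests, choose a cover $F_1,\ldots,F_{k+1}$ minimizing a suitable potential (for example, the lexicographic profile of the $|F_i|$), and exploit the unique cycles created when one tries to move an edge from $F_{k+1}$ into some $F_i$ to trace out augmenting trees; the vertex/edge counts on the union of these trees then build a subgraph $H$ violating $|E(H)|\le k(|V(H)|-1)$. This is essentially Nash-Williams' original route, and the augmenting-tree bookkeeping is where I would expect to spend the bulk of the effort.
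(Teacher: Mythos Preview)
Your argument is correct. The necessity direction is immediate, and the sufficiency via Edmonds' matroid partition theorem is a standard and clean route: the rank formula $r_{M(G)}(F)=|V(F)|-c(F)$ together with the componentwise application of the hypothesis yields exactly the inequality $|F|\le k\,r_{M(G)}(F)$ required by the partition criterion.

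However, there is nothing to compare against: the paper does not supply its own proof of this statement. Theorem~\ref{thm::5.1} is quoted as a classical result of Nash-Williams~\cite{Nash64} in the concluding-remarks section, solely to motivate Problem~\ref{prob::arbo}, and is cited without proof. So your proposal is not competing with anything in the paper; it simply fills in a reference the authors took for granted. Your fallback sketch via augmenting trees is indeed closer in spirit to Nash-Williams' original 1964 argument, while the matroid-partition route you lead with is the modern textbook approach.
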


Naturally, we have the following problem.
\begin{prob}\label{prob::arbo}
Find a tight spectral radius condition for a graph $G$ of order $n$ with $a(G)\le k$ and characterize extremal graphs.
\end{prob}

When $n\leq 2k$, Problem~\ref{prob::arbo} is trivial. Any graph $G$ of order $n\leq 2k$ has the property $a(G)\le k$ and so the extremal graph is $K_n$. To see this, for any subgraph $H$ of $G$, we can deduce that $|E(H)|\leq {|V(H)|\choose 2} \leq k(|V(H)|-1)$ when $n\leq 2k$, and the conclusion follows from Theorem~\ref{thm::5.1}. The situation becomes more involved for $n\geq 2k+1$. In fact, this case is even stronger than Problem~\ref{prob::minkst}, since if $G$ consists of exactly $k$ edge-disjoint spanning trees with no extra edges, we have $a(G) =\tau(G)=k$.
Notice that $a(K_{k}\nabla (K_{k}\cup (n-2k)K_1))=k$ and $e(K_{k}\nabla (K_{k}\cup (n-2k)K_1))=k(n-1)$, 
thus we guess that the extremal graph w.r.t. the spectral radius is also $K_{k}\nabla (K_{k}\cup (n-2k)K_1)$. This is left for possible future work.


\section*{Acknowledgements}
Xiaofeng Gu was supported by a grant from the Simons Foundation (522728), and Huiqiu Lin was supported by the National Natural Science Foundation of China (No. 12011530064) and Natural Science Foundation of Shanghai (No. 22ZR1416300).

\end{document}